\newtheorem{Theorem} {Theorem} [section]
\newtheorem{Lemma} [Theorem] {Lemma}
\newtheorem{Corollary} [Theorem] {Corollary}
\newtheorem{Problem} [Theorem] {Problem}
\newtheorem{Conjecture} [Theorem] {Conjecture}
\newcommand{\Ef}{{\mathbb E}}
\newcommand{\Ff}{{\mathbb F}}
\newcommand{\Pf}{{\mathbb P}}
\newcommand{\cC}{{\mathcal C}}
\newcommand{\cL}{{\mathcal L}}
\newcommand{\cS}{{\mathcal S}}
\newcommand{\PG}{\mathrm{PG}}
\newcommand{\eps}{\varepsilon}
\newcommand{\<}{\langle}
\renewcommand{\>}{\rangle} 
\renewcommand{\phi}{\varphi} 
\newcommand{\gauss}[2]{\genfrac{[}{]}{0pt}{}{#1}{#2}}
\newcommand{\coloneq}{\vcentcolon=}      
\title{Large $(k; r, s; n, q)$-sets in Projective Spaces}
\author{
Ferdinand Ihringer, Jacques Verstra\"{e}te
}
\date{11 Nov 2022}
\begin{document}
\maketitle

\begin{abstract}
  A $(k; r, s; n, q)$-set (short: $(r,s)$-set) of $\PG(n, q)$ is a set of points $X$
  with $|X| = k$ such that no $s$-space contains more than $r$ points of $X$.
  We investigate the asymptotic size of $(r, s)$-sets 
  for $n$ fixed and $q \rightarrow \infty$.
  In particular, we show the existence of 
  $(3, 2)$-sets of size $(1+o(1)) q^{3/2}$ for $n=6$,
  $(4, 2)$-sets of size $(1+o(1)) q^{\frac{n-1}{2}}$,
  and $(9, 2)$-sets of size $(1+o(1)) q^2$ for $n=4$.
  We also generalize a bound by Rao from 1947
  and show that an $(r,s)$-set has size at most $O(q^{\frac{n-e+1}{e}})$ if there 
  exist integers $d,e \geq 2$ such that $s=d(e-1)$ and $r=de-1$.
\end{abstract}

\section{Introduction}

Let $\PG(n, q)$ denote the finite projective space
whose {\it points} and {\it lines} are the $1$- and $2$-dimensional 
subspaces of $\Ff_q^{n+1}$, respectively.
A {\it cap} is a set $X$ of points in $\PG(n, q)$ such that no line 
intersects $X$ in more than $2$ points.
More generally, a set $X$ in $\PG(n, q)$ is commonly called {\it $(k; r, s; n, q)$-set}
if each $s$-space contains at most $r$ points of $X$ and $|X| = k$.
The present work is concerned with $(k; r, s; n, q)$-sets for fixed $n$ 
and asymptotic behavior for $q \rightarrow \infty$.
In the following we abbreviate $(k; r, s; n, q)$-set 
as {\it $(r, s)$-set} (as for us $n$ is fixed, $k$ not exact, 
and we consider asymptotics in $q$).
A $(2, 1)$-set is known as a {\it cap},
a $(r, r-1)$-set is known as a {\it generalized cap},
a $(n, n-1)$-set is known as an {\it arc} and 
corresponds to an {\it MDS code}.
Hirschfeld surveys upper and lower bounds of $(r,s)$-sets 
of maximal size in \cite{Hirschfeld1983}.
In other literature $(r, s)$-sets are known 
as {\it $(s, r)$-subspace evasive sets}, see also \cite{Guruswami2011,PR2004}.

An $(s,1)$-set in $\PG(n, q)$ has size at most $O(q^{n-1})$.
This bound is known to be tight for $n=2,3$.
Segre showed in 1959 \cite{Segre1959} (c.f.\ \cite{EB1999}) that the largest cap
has at least size $\Omega(q^{\lceil \frac23 n - \frac23 \rceil })$.
The first open case for caps is $\PG(4, q)$ for which the largest 
known constructions have size $(3+o(1))q^2$, see \cite{EB2004}.
Dvir and Lovett showed in \cite{DL2011} (Theorem 2.4 together with Claim 3.5)
that for $q$ sufficiently large,
there exists an $(n^s, s)$-set of size $\frac13 q^{n-s}$.
Also see \cite{ST2022} for a construction using random polynomials
(based on the method developed by Bukh in \cite{Bukh2015}).
In Section \ref{sec:lns} we give an explicit
$(n,1)$-set of size $(q-1)^{n-1}$ in $\PG(n, q)$.

In Section \ref{sec:bounds} we establish simple upper
and lower bounds on the size of largest $(r, s)$-sets
for $r \leq 2s-1$.
Sudakov and Tomon show in \cite[Theorem 1.2]{ST2022} that an $(r, s)$-set with 
$r \leq \frac{3}{2} s - 1$ has at most size $4s \cdot q^{\frac{n}{D(r, s)}}$ where $D(r, s) = \lfloor \frac{s}{2(r-s+1)}\rfloor$.
The following lower bound was observed for $s=r-1$ by Rao \cite[p. 136]{Rao1947}
and Bose \cite[Eq. 5.64]{Bose1947}. Their result has been 
rediscovered by Tait and Won in 2021 \cite[Th. 3.1]{TW2021}.
Otherwise, it appears to be new and improves on the bound by Sudakov and Tomon
(at least for $q \geq 4e$).

\begin{Theorem}\label{lem:upper_bnd1}
  Let $X$ be an $(r, s)$-set in $\PG(n, q)$ such that there exist
  integers $d, e \geq 2$ with $s = d(e-1)$ and $r=de-1$.
  Then $|X| \leq C_{d,e} q^{\frac{n-e+1}{e}}+e$ where $C_{d, e} = \sqrt[e]{e! (d-1) \frac{1+2q^{-1}}{1-2e q^{-1}}}$ for $q > 2e$.
  Particularly, $|X| \leq C_{2, e} q^{ \frac{n-e+1}{e}}+e = C_{2, e} q^{\frac{2n-s}{s+2}}+e$ for $s=r-1$ even, and
  $|X| \leq C_{d, 2} q^{\frac12 (n-1)}+e$ for $r=2s-1$. 
\end{Theorem}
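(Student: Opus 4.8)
The plan is to run a double counting over incidences between the $(e-1)$-spaces spanned by $e$-subsets of $X$ and their ``external'' points, exploiting that the hypothesis forces these spaces to behave like a partial spread. First I would record the convenient reformulation of the hypothesis: since $s=d(e-1)$ and $r=de-1$, an $s$-space contains at most $de-1$ points of $X$, i.e.\ no $s$-space contains $de$ points. As $e-1\le s$ for $d\ge 2$, every $(e-1)$-space is contained in an $s$-space and hence meets $X$ in at most $r=de-1$ points, while any $e$ points of $X$ in general position span such an $(e-1)$-space. The target exponent $\frac{n-e+1}{e}$ matches exactly the partial-spread bound $\frac{q^{n+1}-1}{q^e-1}\approx q^{n-e+1}$ on the number of pairwise skew $(e-1)$-spaces in $\PG(n,q)$, which is what singles out $(e-1)$-spaces as the right objects to count.

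Next I would set up the count of flags $(B,P)$, where $B\subseteq X$ is an independent $e$-subset and $P\in\langle B\rangle\setminus X$. For the lower bound, each such $B$ spans an $(e-1)$-space carrying $\frac{q^e-1}{q-1}$ points, of which at most $de-1$ lie in $X$, so it contributes at least $\frac{q^e-1}{q-1}-(de-1)$ flags; the number of independent $e$-subsets is $\binom{|X|}{e}$ minus the degenerate ones lying in $(e-2)$-spaces, and the latter form a lower-order term bounded by applying the $r$-bound in $(e-2)$-spaces. Tracking these two corrections is what produces the additive $+e$ and the factor $1+2q^{-1}$ in $C_{d,e}$, and I expect this part to be routine.

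The heart of the argument, and the main obstacle, is the matching upper bound: I would show that every external point $P$ lies on at most $d-1$ of the spanned $(e-1)$-spaces. The mechanism is a dimension-and-counting argument inside a single $s$-space. If $U_1,\dots,U_d$ were $d$ distinct spanned $(e-1)$-spaces through $P$ that pairwise meet only in $P$, then, writing $U_i=\langle P\rangle\oplus V_i$ with $\dim V_i=e-1$, their join has vector dimension $1+d(e-1)=s+1$, i.e.\ projective dimension $s$; since $P\notin X$ the sets $X\cap U_i$ are then disjoint and contribute at least $de$ points of $X$ to this $s$-space, contradicting $|X\cap(\text{$s$-space})|\le de-1$. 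Combined with the multiplicity bound $d-1$ and the fact (provable exactly when $d=2$) that the spanned spaces meet $X$ in precisely $e$ points, the flag count yields $\binom{|X|}{e}\lesssim(d-1)q^{n-e+1}$, hence $|X|\le C_{d,e}q^{(n-e+1)/e}+e$, with the denominator $1-2eq^{-1}$ absorbing the errors. The difficulty is that for $d,e\ge 3$ two spanned spaces through $P$ need not be transverse: they may share points of $X$, so they are neither skew nor are the $V_i$ in direct sum. I would handle this by a rank (matroid) argument confined to the smallest $s$-space containing the configuration, trading lost dimension against lost points of $X$ so that $|X\cap(\text{span})|>r$ is forced regardless of overlaps. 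The two clean endpoints---$e=2$ with $d$ arbitrary (where distinct $U_i$ through $P$ are automatically $X$-disjoint lines) and $d=2$ with $e$ arbitrary (where $2e-1$ distinct points of $X$ would be forced to be independent inside a space of too small dimension)---serve as the model cases that the general argument must interpolate.
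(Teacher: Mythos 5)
Your counting skeleton is the same as the paper's: double count pairs consisting of an $e$-subset $B\subseteq X$ and an external point $P\in\langle B\rangle$, bound the multiplicity of each $P$ by $d-1$, and rearrange to get $|X|\le C_{d,e}q^{(n-e+1)/e}+e$. But the lemma the whole count rests on --- ``every external point $P$ lies on at most $d-1$ of the spanned $(e-1)$-spaces'' --- is false as stated once $e\ge 3$, including at your ``clean endpoint'' $d=2$. Take any $a,b\in X$ and any external point $P$ on the secant line $\langle a,b\rangle$: for every choice of $c_1,\ldots,c_{e-2}\in X$ the independent set $B=\{a,b,c_1,\ldots,c_{e-2}\}$ spans an $(e-1)$-space through $P$, so $P$ lies on the order of $|X|^{e-2}$ spanned spaces, and this configuration is perfectly legal --- e.g.\ for $e=3$, $d$ planes through the common line $\langle a,b\rangle$ span at most a $(d+1)$-space containing only $d+2$ points of $X$, far below any forbidden threshold ($r+1=3d$ in an $s=2d$-space). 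Consequently no constant multiplicity bound holds for your flags, your double count yields nothing, and the ``rank/matroid trade-off'' you defer to cannot rescue the claim, because the claim itself is false: overlapping $B_i$'s genuinely produce unboundedly many spanned spaces through one external point without ever forcing $|X\cap(\mathrm{span})|>r$. The repair must change the flag definition, not the proof of the multiplicity statement.

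That change is exactly the paper's key device, absent from your proposal: count only flags in which $P\in\langle B\rangle$ but $P$ lies in the span of \emph{no} $(e-1)$-subset of $B$ (the paper's set $\tilde{Y}$). This kills the secant-line counterexample, costs only the lower-order correction $\binom{e}{e-1}\gauss{e-1}{1}$ in the per-$B$ flag count (which is where the paper's constant comes from), and makes the $d$-fold argument work with no transversality hypothesis: if $P\in\tilde{Y}_1\cap\cdots\cap\tilde{Y}_d$, then each $\langle Y_i\rangle$ is spanned by $P$ together with any $e-1$ points of $Y_i$, so with $y=|\bigcup Y_i|$ the union spans a space of dimension at most $y-d$ containing $y$ points of $X$ --- overlaps among the $Y_i$ only help here. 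The paper also needs a second ingredient you do not have: since $y-d$ can be well below $s$, the contradiction is not with the bare hypothesis ``at most $r$ points in every $s$-space'' but with \emph{properness} (no $m$-space contains $m+1+r-s$ points of $X$, for any $m$), which the paper first derives, via a quotient argument, for every $(r,s)$-set with $|X|>r$. In short: right counting scheme, and your restriction to independent $e$-subsets is a legitimate (even slightly more careful) bookkeeping choice, but the heart of the proof --- the multiplicity bound --- is both unproved in your writeup and, in the form you state it, untrue.
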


In Section \ref{sec:planes}, we investigate the special case of planes.
In particular, we provide a construction for a $(3,2)$-set of size $\Theta(q^{3/2})$ in $\PG(6, q)$
which combines algebraic and probabilistic methods.
This improves on the trivial lower bound of $\Omega(q^{4/3})$.
We also show for $n = 4$ that $(9, 2)$-sets of size $(1+o(1)) q^{2}$
exist for $q$ sufficiently large, improving the result by Dvir and Lovett
for that case.
We conclude in Section \ref{sec:field_red} with product type constructions
for $(r,s)$-sets using field reduction.

\section{Lines} \label{sec:lns}

Throughout the whole text, we identify a vector of the form
$(1, x_1, \ldots, x_n)$ with the 
(affine) point $\< (1, x_1, \ldots, x_n)\>$ of $\PG(n, q)$.
We assume that $n \geq 2$.
Our main result on lines is as follows:

\begin{Theorem}\label{thm:lns}
  There exists a $(n,1)$-set of size $(q-1)^{n-1}$ in $\PG(n, q)$.
\end{Theorem}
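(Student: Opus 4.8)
The plan is to give an explicit ``norm-surface'' construction. I would set
\[
  X = \bigl\{\, \langle (1, x_1, \ldots, x_n)\rangle : x_1, \ldots, x_n \in \Ff_q \setminus \{0\} \text{ and } x_1 x_2 \cdots x_n = 1 \,\bigr\},
\]
i.e.\ the affine points all of whose coordinates are nonzero and whose coordinate product equals $1$. Since the last coordinate is forced to be $x_n = (x_1 \cdots x_{n-1})^{-1} \in \Ff_q \setminus \{0\}$ once the first $n-1$ coordinates are chosen freely in $\Ff_q \setminus \{0\}$, and since distinct normalized vectors $(1, x_1, \ldots, x_n)$ represent distinct points of $\PG(n,q)$, this yields exactly $|X| = (q-1)^{n-1}$ points.

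It remains to check that no line meets $X$ in more than $n$ points. Let $L$ be a line of $\PG(n,q)$. If $L$ lies in the hyperplane at infinity $\{x_0 = 0\}$ then $L \cap X = \emptyset$ and there is nothing to prove, so assume $L$ meets the affine chart. Then its affine points form an affine line, which I would parametrize as $\{(1, a_1 + t v_1, \ldots, a_n + t v_n) : t \in \Ff_q\}$ for some $a \in \Ff_q^n$ and some nonzero direction $v = (v_1, \ldots, v_n)$. A point of this line lies in $X$ exactly when $t$ is a root of
\[
  P(t) = \prod_{i=1}^n (a_i + t v_i) - 1 .
\]
Note that the side condition requiring every coordinate to be nonzero is automatic whenever the product equals $1$, so the count of points of $X$ on $L$ is precisely the number of roots of $P$ in $\Ff_q$.

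The key observation is that $\deg P$ equals the number of indices $i$ with $v_i \neq 0$, which is between $1$ and $n$ because $v \neq 0$; in particular $P$ is a nonzero polynomial of degree at most $n$, whence it has at most $n$ roots in $\Ff_q$. Therefore $|L \cap X| \leq n$ for every line $L$, and $X$ is the desired $(n,1)$-set.

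There is no serious obstacle here: the entire argument rests on choosing the right set, and the only points that need care are (i) verifying that the product hypersurface is cut by any line in a polynomial whose degree cannot exceed $n$, and (ii) confirming $P \not\equiv 0$, which holds because its leading coefficient $\prod_{i \,:\, v_i \neq 0} v_i$ is nonzero. The mild subtlety worth stating explicitly is that a direction vector $v$ may have some zero entries, lowering the degree of $P$; this only helps, since the bound $n$ still applies, and $v \neq 0$ guarantees $\deg P \geq 1$ so that $P$ is genuinely nonconstant and the root-counting step is valid.
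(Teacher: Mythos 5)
Your proof is correct, and your set is essentially the construction the paper itself records in its closing remark of Section~\ref{sec:lns}: the affine points with all coordinates nonzero and coordinate product $1$ are exactly the canonical affine part $\{(1, \tfrac{1}{x_1\cdots x_{n-1}}, x_1, \ldots, x_{n-1}) : x \in \Ff_q^{*n-1}\}$ of the hypersurface $X_0^n - X_1\cdots X_n = 0$, up to permuting coordinates, while the paper's main set is the parametrization $F(x) = (1, F_0(x), x_1F_0(x), \ldots, x_{n-1}F_0(x))$ of the companion curve $X_1^n - X_2\cdots X_n = 0$. Where you genuinely diverge is the verification of the line condition. The paper argues via linear algebra: three collinear points of $X$ force a rank-$2$ condition on an $(n+1)\times 3$ matrix, which after row reduction yields the polynomial systems \eqref{eq:dets2} and \eqref{eq:dets3}, and the intersection bound follows from counting solutions of these degree-$n$ (resp.\ degree-$(n-1)$) systems, with a case split on whether $F_0(x)=F_0(y)=F_0(z)$. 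You instead restrict the single defining equation to a parametrized affine line and count roots of $P(t) = \prod_{i=1}^n(a_i + tv_i) - 1$; this is a one-step B\'ezout-type argument, noticeably simpler and more transparent than the paper's matrix computation, and it disposes of lines in the hyperplane at infinity trivially. The trade is that your argument leans on having the set cut out by one explicit degree-$n$ equation, whereas the paper's rank method applies directly to the parametrized image of $F$.

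One small imprecision to fix: your claim that $\deg P = \#\{i : v_i \neq 0\}$ with leading coefficient $\prod_{i:\, v_i \neq 0} v_i$ ignores the constant factors $a_i$ at indices where $v_i = 0$. The true leading coefficient of the product term is $\bigl(\prod_{i:\, v_i = 0} a_i\bigr)\bigl(\prod_{i:\, v_i \neq 0} v_i\bigr)$, and if some $a_i = 0$ with $v_i = 0$ then the product vanishes identically, giving $P \equiv -1$; this contradicts your assertion that $\deg P \geq 1$, but harmlessly so, since $P$ is then a nonzero constant with no roots and the bound $|L \cap X| \leq n$ holds a fortiori. The correct dichotomy is: either $P \equiv -1$, or $\deg P = \#\{i : v_i \neq 0\} \in \{1, \ldots, n\}$ with nonzero leading coefficient; in both cases $P$ is a nonzero polynomial of degree at most $n$ and the root count goes through.
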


We shall prove this in two lemmas.
Let $x \in \Ff_q^{n-1}$.
Define
\begin{align*}
  &F_0(x) = \prod_{i=1}^{n-1} x_i,
  &&F_i(x) = x_i F_0(x) ~\text{ for }~ i \in \{ 1, \ldots, n-1 \}.
\end{align*} 
Put $F(x) = (1, F_0(x), \ldots, F_{n-1}(x))$.
We denote $\Ff_q \setminus \{ 0 \}$ by $\Ff_q^*$
and write $\Ff_q^{*n-1}$ for $(\Ff_q^*)^{n-1}$.
Put 
$
  X = \{ F(x): x \in \Ff_q^{*n-1} \}.
$

\begin{Lemma}\label{lem:nr_pts}
  The curve $X$ has $(q-1)^{n-1}$ points.
\end{Lemma}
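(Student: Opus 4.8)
The plan is to observe that, by construction, the parametrization $F \colon \Ff_q^{*n-1} \to X$ is surjective onto $X$, so it suffices to prove that $F$ is injective; since $|\Ff_q^{*n-1}| = (q-1)^{n-1}$, injectivity immediately yields $|X| = (q-1)^{n-1}$.

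First I would reduce the problem from projective points to affine coordinate vectors. Each image $F(x)$ has first coordinate equal to $1$, so if $F(x)$ and $F(y)$ represent the same point of $\PG(n, q)$, the scalar relating the two representatives is forced to be $1$. Hence the vectors $(1, F_0(x), \ldots, F_{n-1}(x))$ and $(1, F_0(y), \ldots, F_{n-1}(y))$ are literally equal, and it is enough to show that this equality of vectors forces $x = y$.

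The key step exploits the restriction of the domain to $\Ff_q^*$: for $x \in \Ff_q^{*n-1}$ every coordinate $x_i$ is nonzero, so $F_0(x) = \prod_{i=1}^{n-1} x_i \neq 0$. Comparing the coordinate $F_0$ gives $F_0(x) = F_0(y) \neq 0$, while comparing the remaining coordinates gives $x_i F_0(x) = y_i F_0(y)$ for each $i$. Since the common value $F_0(x) = F_0(y)$ is nonzero, I may cancel it to obtain $x_i = y_i$ for all $i$, that is $x = y$. Equivalently, one recovers each coordinate of the preimage from the image via the ratio $x_i = F_i(x)/F_0(x)$, exhibiting an explicit left inverse of $F$ and thus proving injectivity.

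There is essentially no serious obstacle here; the only two points requiring care are the passage between projective and affine representatives, which is exactly what the normalized leading coordinate $1$ handles, and the nonvanishing of $F_0(x)$, which is precisely why the domain is taken to be $\Ff_q^{*n-1}$ rather than all of $\Ff_q^{n-1}$. Over the full space $F_0$ could vanish, the recovery formula $x_i = F_i(x)/F_0(x)$ would break down, and injectivity could fail.
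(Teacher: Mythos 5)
Your proposal is correct and matches the paper's proof in essence: the paper likewise notes that the normalized first coordinate fixes the projective representative and recovers the preimage explicitly as $x = (u_2/u_1, \ldots, u_n/u_1)$, i.e., $x_i = F_i(x)/F_0(x)$, using $F_0(x) \neq 0$ on $\Ff_q^{*n-1}$. Your write-up simply spells out the injectivity/left-inverse reasoning that the paper leaves terse.
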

\begin{proof}
  Let $u \in X$. Then there exists an $x \in \Ff_q^{*n-1}$
  such that $u = F(x)$. As $x_1\cdots x_{n-1} \neq 0$, we have 
  $x = ( u_2/u_1, \ldots, u_n/u_1)$.
\end{proof}

\begin{Lemma}\label{lem:nr_triples}
  No line intersects $X$ in more than $n$ points.
\end{Lemma}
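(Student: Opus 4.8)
The plan is to replace the parametric description of $X$ by a single implicit equation and then restrict that equation to a line. Since every point of $X$ has first coordinate $1$, I identify $F(x)$ with the affine point $(P, x_1 P, \ldots, x_{n-1} P)$, where $P = F_0(x) = \prod_{i=1}^{n-1} x_i$. Writing $(y_1, \ldots, y_n)$ for these affine coordinates gives $y_1 = P$ and $y_{i+1} = x_i P$, so $x_i = y_{i+1}/y_1$ (legitimate since $P \neq 0$), and substituting back into $y_1 = \prod_i x_i$ yields $y_1^{\,n} = y_2 y_3 \cdots y_n$. First I would verify the converse, that every $y \in (\Ff_q^*)^n$ satisfying this equation arises as some $F(x)$, so that $X$ is \emph{exactly} the set of affine points with all coordinates nonzero lying on the hypersurface $y_2 \cdots y_n = y_1^{\,n}$.

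Next I would reduce the projective statement to an affine one. A projective line in $\PG(n,q)$ either lies entirely in the hyperplane at infinity, in which case it meets $X$ in no point, or else its affine points form an affine line $\{a + t b : t \in \Ff_q\}$ with $a, b \in \Ff_q^{n}$ and $b \neq 0$; this is a routine normalization of a basis of the corresponding $2$-dimensional subspace. Substituting this parametrization into the implicit equation produces the univariate polynomial
\[ g(t) = \prod_{j=2}^{n}(a_j + t b_j) - (a_1 + t b_1)^{n}, \]
and every point of $X$ on the line corresponds to a root $t \in \Ff_q$ of $g$. Since the product contributes degree at most $n-1$ and the power contributes degree at most $n$, the polynomial $g$ has degree at most $n$.

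The one substantive point, and the step I expect to be the main obstacle, is to rule out $g \equiv 0$, since otherwise the degree bound is vacuous and the line could conceivably meet the hypersurface in up to $q$ points. I would argue via leading coefficients. If $b_1 \neq 0$, the coefficient of $t^{n}$ in $g$ equals $-b_1^{\,n} \neq 0$, so $g$ has degree exactly $n$; if $b_1 = 0$, then $(a_1 + t b_1)^{n}$ is the constant $a_1^{\,n}$ while $\prod_{j=2}^{n}(a_j + t b_j)$ is nonconstant, because $b \neq 0$ forces some $b_j \neq 0$ with $j \geq 2$, so again $g \not\equiv 0$. In either case $g$ is a nonzero polynomial of degree at most $n$, hence has at most $n$ roots in $\Ff_q$, and therefore the line contains at most $n$ points of $X$. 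I would remark that the nonzero-coordinate condition defining $X$ is not even needed for the upper bound; it can only discard further candidate roots of $g$.
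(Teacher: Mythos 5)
Your route---trading the parametrization for the implicit equation $y_1^{\,n} = y_2\cdots y_n$ (which the paper only remarks on \emph{after} its proof, without using it) and restricting that equation to a line---is genuinely different from the paper's argument, which takes three collinear points $F(x), F(y), F(z)$, imposes a rank-$2$ condition on a matrix, and counts solutions of the resulting determinant equations. But the step you yourself flag as the substantive one contains a real gap. In the branch $b_1 = 0$ you claim that $\prod_{j=2}^{n}(a_j + t b_j)$ is nonconstant because $b \neq 0$ forces some $b_j \neq 0$ with $j \geq 2$. That inference fails: if $a_k = b_k = 0$ for some $k \geq 2$, the factor $a_k + t b_k$ is the zero polynomial, so the whole product is identically zero---constant---no matter what the other $b_j$ are. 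If in addition $a_1 = 0$, then $g \equiv 0$ and your degree bound is vacuous. This is not a hypothetical worry: for $n \geq 3$ the hypersurface $y_1^{\,n} = y_2 \cdots y_n$ genuinely contains full affine lines, e.g.\ $\{(0, 0, t, 1, \ldots, 1) : t \in \Ff_q\}$ satisfies the equation identically. Consequently your closing remark---that the nonzero-coordinate condition defining $X$ is not needed for the upper bound---is exactly backwards: the lines on which $g$ vanishes identically are precisely the ones that only the nonzero-coordinate condition can dispose of.

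The repair is short, so the approach survives. Classify when $g \equiv 0$ with $b \neq 0$: if $b_1 \neq 0$ the $t^n$-coefficient $-b_1^{\,n}$ is nonzero, and if $b_1 = 0$ with no factor of the product identically zero, then the product of nonzero polynomials over a field is a nonzero polynomial of degree $\geq 1$, so $g \not\equiv 0$. Hence $g \equiv 0$ forces $b_1 = 0$, $a_1 = 0$, and $a_k = b_k = 0$ for some $k \geq 2$; but then $y_1 = a_1 + t b_1$ vanishes at \emph{every} point of the line, whereas every point of $X$ has $y_1 = F_0(x) \neq 0$, so such a line meets $X$ in no point and the lemma holds trivially for it. With that case added, your proof is complete, gives the same bound of $n$ points per line as the paper, and is arguably cleaner than the paper's rank-and-determinant computation; without it, the proof as written is incorrect at its acknowledged crux.
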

\begin{proof}
\newcommand{\Fx}{F_0(x)}
\newcommand{\Fy}{F_0(y)}
\newcommand{\Fz}{F_0(z)}
  Suppose that three distinct points $u,v,w \in X$ with $u=F(x)$, 
  $v=F(y)$, and $w=F(w)$ for $x,y,z \in \Ff_q^{*n-1}$ are collinear. %
  Then the following matrix has rank 2 (if the rank is 1, then $x=y=z$; 
  if the rank is 3, then $\< u, v, w\>$ is a plane):
  \begin{align*}
    \begin{pmatrix}
     1 & 1 & 1\\
     \Fx & \Fy & \Fz \\
     x_1 \Fx & y_1 \Fy & z_1 \Fz \\
     x_2 \Fx & y_2 \Fy & z_2 \Fz \\
     \vdots & \vdots & \vdots  \\
     x_{n-1} \Fx & y_{n-1} \Fy & z_{n-1} \Fz
    \end{pmatrix}.
  \end{align*}
  By subtracting $z_i$-times the second row from the $(i+1)$-th 
  row for $i \in \{ 1, \ldots, n-1\}$ and subtracting $\Fz$-times the first 
  row from the second, we obtain 
  \begin{align*}
    &M \coloneq \begin{pmatrix}
     1 & 1 & 1 \\
     \Fx-\Fz & \Fy-\Fz & 0 \\
     (x_1-z_1) \Fx & (y_1-z_1) \Fy & 0 \\
     (x_2-z_2) \Fx & (y_2-z_2) \Fy & 0 \\
     \vdots & \vdots  & \vdots \\
     (x_{n-1}-z_{n-1}) \Fx & (y_{n-1}-z_{n-1}) \Fy & 0 
    \end{pmatrix}.
  \intertext{ Consider}
    &M' \coloneq \begin{pmatrix}
     \Fx-\Fz & \Fy-\Fz \\
     (x_1-z_1) \Fx & (y_1-z_1) \Fy \\
     (x_2-z_2) \Fx & (y_2-z_2) \Fy \\
     \vdots & \vdots \\
     (x_{n-1}-z_{n-1}) \Fx & (y_{n-1}-z_{n-1}) \Fy
    \end{pmatrix}.
  \end{align*}
  The matrix $M$ has rank $2$ if and only if $M'$ has rank $1$. 
  This is precisely the case when all $(2 \times 2)$-submatrices
  $\tilde{M}$ of $M'$ have rank at most $1$, that is $\det(\tilde{M})=0$.
  Hence, we obtain that 
  \begin{align}
    & (\Fx - \Fz) \Fy (y_i - z_i) = (\Fy - \Fz) \Fx (x_i - z_i)
    \label{eq:dets2}
  \intertext{ for $i \in \{ 1, \ldots, n-1 \}$, and} 
    &(x_1-z_1)(y_i-z_i) = (y_1-z_1)(x_i-z_i)
    \label{eq:dets3}
  \end{align}
  for $i \in \{ 2, \ldots, n-1 \}$. Here we use that $\Fx, \Fy \neq 0$.
  
  \medskip 
  
  We want to show that no line contains more than $n$ points.
  
  If $\Fx = \Fy = \Fz$, then the equations in \eqref{eq:dets3}
  plus $\Fx = \Fy = \Fz$ determine the entries of $z$ as the solution 
  to polynomials of degree $n-1$. Hence, there
  are at most $n-3$ nontrivial solutions for $z$.
  
  Otherwise, Equation \eqref{eq:dets2} for $i=1$ 
  together with the equations in \eqref{eq:dets3}
  determines the entries of $z$ as the solution to polynomials
  of degree $n$. Hence, we have at most $n-2$ nontrivial solutions.
\end{proof}

Note that the points of $X$ lie on the affine curve 
\[ 
 X_1^n - X_2 \cdots X_n = 0,
\]
and (if we switch the first two coordinates) the projective curve
\[
  X_0^n - X_1 X_2 \cdots X_n = 0.
\]
It seems to be highly natural do study these curves over finite fields.
Note that the canonical affine part of the last curve, that is
\[
    \{ ( 1, \tfrac{1}{x_1 \cdots x_{n-1}}, x_1, x_2, \ldots, x_{n-1}): x \in \Ff_q^{*n-1} \},
\]
satisfies Lemma \ref{lem:nr_pts} and Lemma \ref{lem:nr_triples} as well.

\section{Simple Bounds on \texorpdfstring{$(r,s)$}{(r,s)}-Sets} \label{sec:bounds}

\begin{table}
\begin{center}
\begin{tabular}{l|lllllll}
$r - s$ & 1 & 2 & 3 & 4 & 5 & 6\\ \hline 
$s=1$ & $n{-}1$ &  &  &  &  \\
$s=2$ & $\frac{n-1}{2}$ & $n{-}2$ &  &  &  \\
$s=3$ & $\frac{n-2}{2}$ & $\frac{n-1}{2}$ & $n{-}3$ &  &  \\
$s=4$ & $\frac{n-2}{3}$ & $\frac{n-2}{2}$ & $\frac{n-1}{2}$ & $n{-}4$\\
$s=5$ & $\frac{n-3}{3}$ & $\frac{n-3}{2}$ & $\frac{n-2}{2}$ & $\frac{n-1}{2}$ & $n{-}5$\\
$s=6$ & $\frac{n-3}{4}$ & $\frac{n-2}{3}$ & $\frac{n-3}{2}$ & $\frac{n-2}{2}$ & $\frac{n-1}{2}$ & $n{-}6$\\
\end{tabular}
\end{center}
\caption{The upper bounds $O(q^m)$ on the size of a $(r, s)$-set from 
Theorem \ref{lem:upper_bnd1}, Lemma \ref{lem:upper_bnd4}, and Lemma \ref{lem:upper_bnd5}
for small $s$ and sufficiently large $n$. 
Here $m$ is the entry in the table, indexed by $r$ and $r-s$.}
\end{table}

We write $\gauss{n+1}{k+1}$ for the number
of $k$-spaces in $\PG(n, q)$. For our estimates, 
we will use that $\gauss{n+1}{k+1} = (1+o(1)) q^{(k+1)(n-k)}$
(as $q \rightarrow \infty$).
The following result was observed for $s=r-1$ 
by Gilbert \cite[I.4]{Gilbert1952} and Varshamov \cite{Varshamov1957}
and is well-known in coding theory as the Gilbert-Varshamov bound.

\begin{Lemma}\label{lem:lower_bnd2}
 Let $s+1 \geq r \geq 3$.
 There exists a $(r,s)$-set in $\PG(n, q)$
 of size $(\frac{r!-1}{r!} + o(1)) q^{n - s - s(n-s)/r}$ (as $q \rightarrow \infty$).
\end{Lemma}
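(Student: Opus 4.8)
The plan is to use the alteration (deletion) form of the Gilbert--Varshamov argument. First I would observe that the only substantial case is $r=s+1$: if $r\le s$, then any $r+1$ points span a subspace of projective dimension at most $r\le s$, so every $(r+1)$-set already lies in a common $s$-space and a valid $(r,s)$-set has at most $r$ points; correspondingly the exponent $n-s-s(n-s)/r=(n-s)(r-s)/r$ is $\le 0$ and the claimed bound is vacuous. Thus I assume $s=r-1$, where the exponent equals $(n-s)/r=(n-r+1)/r$ and the forbidden configurations are exactly the $(r+1)$-subsets of points spanning projective dimension $\le s=r-1$ (rather than the generic $r$).

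The key estimate is that a uniformly random $(r+1)$-subset of points of $\PG(n,q)$ spans dimension $\le s$ with probability $(1+o(1))\,q^{-(n-s)}$. I would prove this by choosing the points one vector at a time: the first $r=s+1$ points generically span an $s$-space, i.e.\ an $(s+1)$-dimensional linear subspace, and the last point is \emph{bad} precisely when it falls into that span, which contains $(1+o(1))q^{s}$ of the $(1+o(1))q^{n}$ points. Equivalently, writing $B$ for the number of bad $(r+1)$-subsets, I would count $B=(1+o(1))\,\gauss{n+1}{s+1}\binom{\gauss{s+1}{1}}{r+1}$, using that a generic bad configuration of $r+1=s+2$ points spans a unique $s$-space, and then read off the probability as $B/\binom{N}{r+1}$ with $N=\gauss{n+1}{1}$.

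Then I would run the deletion argument. Include each point of $\PG(n,q)$ independently with probability $\rho$ chosen so that the expected number of chosen points is $M:=\rho N=q^{(n-s)/r}$, and call the random set $Y$. Declare a chosen point bad if it lies in some bad $(r+1)$-subset contained in $Y$, let $W$ be the number of bad points, and let $Z$ be obtained by deleting every bad point; then $Z$ contains no bad $(r+1)$-subset and hence is a valid $(r,s)$-set, since any $s$-space meeting $Z$ in $\ge r+1$ points would supply a bad subset all of whose points were deleted. A union bound over the $(r+1)$-subsets through each point gives $\Ef[W]\le (r+1)\rho^{r+1}B$, and the arithmetic (carried out with $s=r-1$) collapses to $\Ef[W]\le (1+o(1))\tfrac{1}{r!}M$, because $M=q^{(n-s)/r}$ is exactly the threshold at which the expected number of bad configurations through a fixed chosen point equals $\tfrac{1}{r!}$. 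Hence $\Ef[|Z|]\ge M-(1+o(1))\tfrac{M}{r!}=(\tfrac{r!-1}{r!}+o(1))\,q^{(n-s)/r}$, and some outcome attains at least this value.

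The main obstacle is pinning the constant to exactly $\tfrac{r!-1}{r!}$ rather than merely the order $q^{(n-s)/r}$: this forces the nonstandard choice of \emph{not} optimizing $\rho$ but instead fixing $M=q^{(n-s)/r}$, and it requires tracking the leading constants in both $B$ and $\binom{N}{r+1}$ so that the factor $(r+1)$ from the union bound cancels against the $(r+1)!$ in the count of $(r+1)$-subsets, leaving precisely $1/r!$. The only other points needing care are checking that $(r+1)$-subsets spanning dimension strictly below $s$, and those lying in more than one $s$-space, contribute only to the $o(1)$ error.
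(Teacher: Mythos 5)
Your proposal is correct and is essentially the paper's own proof: the same alteration (Gilbert--Varshamov) argument with the same density $\rho \gauss{n+1}{1} = q^{n-s-s(n-s)/r}$, the same count $\gauss{n+1}{s+1}\binom{\gauss{s+1}{1}}{r+1}\rho^{r+1}$ of violating configurations (you count bad $(r+1)$-subsets, the paper counts bad $s$-spaces, which is the identical double count), and the same deletion of at most $r+1$ points per violation producing the constant $\frac{r!-1}{r!}$. Your preliminary reduction to $r=s+1$ is a harmless extra observation; the paper's computation simply runs uniformly over all $r\leq s+1$, the cases $r\leq s$ being degenerate exactly as you note.
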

\begin{proof}
  Delete points from $\PG(n, q)$ with probability $p$.
  Let $X$ denote the resulting set of points.
  For a random $s$-space $S$, we have
  \[
   \Pf(|X \cap S| \geq r+1) \leq \binom{\gauss{s+1}{1}}{r+1} p^{r+1}.
  \]
  Hence, if we take $p = (1+o(1))q^{-s(n-s)/r-s}$
  and let $\cS$ denote the set of $s$-spaces which meet $X$ in at least $r+1$ points, then
  \begin{align*}
    \Ef(|\cS|)
    \leq \gauss{n+1}{s+1} \binom{\gauss{s+1}{1}}{r+1} p^{r+1}
    = \frac{1}{(r+1)!} (1+o(1))q^{n-2s -s(n-s)/r }.
  \end{align*}
  Furthermore, we have 
  \begin{align*}
   \Ef(|X|) = p \gauss{n+1}{1} = (1+o(1)) q^{n-s - s(n-s)/r}.
  \end{align*}
  Hence, we can delete the at most $(r+1) \cdot \frac{1}{(r+1)!} (1+o(1))q^{n-s -s(n-s)/r }$
  points of $X$ in any $s$-space that intersects $X$ in 
  at least $r+1$ points and we obtain an $(r, s)$-set of size
  \[
   \frac{r!{-}1}{r!} (1+o(1))q^{ n-s-s(n-s)/r }. \qedhere
  \]
\end{proof}

The following statement was already observed by Gulati \cite{Gulati1971a}
for $s=r-1$.

\begin{Lemma}\label{lem:upper_bnd5}
 Let $m \geq 1$.
 Let $B$ be the maximal size of an $(r, s)$-set in $\PG(n, q)$.
 Then $|X|-m \leq B$ for an $(r+m, s+m)$-set $X$ in $\PG(n+m, q)$.
\end{Lemma}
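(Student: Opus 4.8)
The plan is to realise $\PG(n, q)$ as a quotient of $\PG(n+m, q)$ and to push $X$ forward under the associated projection, arranging that only the $m$ points at the centre are lost. It is cleanest to induct on $m$, so that it suffices to treat $m=1$: if every $(r+1, s+1)$-set in $\PG(n'+1, q)$ yields an $(r, s)$-set in $\PG(n', q)$ of size at least (its size)$\,-1$, then applying this $m$ times, with $(r, s, n')$ running through $(r+m-1, s+m-1, n+m-1), \ldots, (r, s, n)$, converts $X$ into an $(r, s)$-set in $\PG(n, q)$ of size at least $|X| - m$, whence $|X| - m \leq B$. Equivalently one can project in a single step from an $(m-1)$-space $\Pi$ spanned by $m$ points of $X$ in general position; I describe the one-step case and indicate the general one in passing.

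First I would fix a point $P \in X$ and let $\phi \colon \PG(n+1, q) \setminus \{P\} \to \PG(n, q)$ be the projection from $P$, identifying the target with the quotient $\PG(n+1, q)/P$, and set $X' \coloneq \phi(X \setminus \{P\})$. The clean half of the argument is to verify that $X'$ is an $(r, s)$-set. An $s$-space $\bar T$ of the quotient is the image of a unique $(s+1)$-space $T$ of $\PG(n+1, q)$ with $P \in T$; since $X$ is an $(r+1, s+1)$-set we have $|X \cap T| \leq r+1$, and as $P \in X \cap T$ the number of points of $X \setminus \{P\}$ in $T$, hence the number of points of $X'$ in $\bar T$, is at most $r$. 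In the one-step version the centre $\Pi$ carries $m$ points of $X$, and these are exactly what absorbs the gap between $r+m$ and $r$.

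It remains to control $|X'|$, and here lies the real work. We have $|X'| = |X| - 1$ precisely when $\phi$ is injective on $X \setminus \{P\}$, that is, when no line through $P$ meets $X$ in three or more points; any line through $P$ carrying at least three points of $X$ collapses several points into one and costs us in the count, after which $|X'| \leq B$ only gives the weaker $|X| - 1 \leq B + (\text{loss})$. Thus the main obstacle is to choose the centre $P$ (respectively the spanning points of $\Pi$) lying on no such \emph{trisecant} line — equivalently, so that the fibres of $\phi$ over $X'$ are singletons away from the centre.

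To overcome this I would exploit the defining property of $X$ to guarantee an admissible centre. If $X$ has no three collinear points, every $P \in X$ works and we are done. Otherwise a trisecant $\ell$ is heavily constrained: it lies in $(s+1)$-spaces each meeting $X$ in at most $r+1$ points, so projection of $X$ from $\ell$ yields an $(r-2, s-1)$-set in $\PG(n-1, q)$, and when $r=2$ (where three collinear points already exhaust an $(s+1)$-space) this forces $|X| = 3$, so that $|X| - 1 \leq B$ holds outright. The crux of a complete proof is to upgrade this observation into a counting bound on the points of $X$ that lie on trisecant lines, thereby ensuring some point of $X$ avoids them all; I expect this fibre-counting step, rather than the projection itself, to be the delicate part.
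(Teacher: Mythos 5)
Your skeleton coincides with the paper's: the printed proof is exactly the one-step version you describe --- ``let $S$ be the subspace spanned by $m$ points of $X$; the projection of $X \setminus S$ onto a complement of $S$ is an $(r,s)$-set in $\PG(n, q)$'' --- and your verification of the $(r,s)$-property (each $s$-space of the quotient lifts to an $(s+m)$-space through the centre, whose $m$ centre points absorb the difference between $r+m$ and $r$) is the intended count, carried out correctly. But your write-up stops short of a proof at the decisive point, and you say so yourself: you never establish that a centre can be chosen so that at most $m$ points are lost, i.e.\ that the projection is injective away from the centre. Ending with ``the crux of a complete proof is to upgrade this observation into a counting bound \dots I expect this fibre-counting step \dots to be the delicate part'' leaves the conclusion $|X| - m \leq B$ proved only in the case where a trisecant-free centre happens to exist. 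That is a genuine gap, not a routine detail to be filled in.

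Moreover, the specific repair you propose --- find a point of $X$ lying on no trisecant --- provably cannot work in general once $r \geq s+2$: take $X$ to be a union of pairwise disjoint collinear triples, otherwise in general position. A line with $3$ points is permitted as soon as $3 \leq r+m$, every low-dimensional subspace can be kept below its quota, and yet \emph{every} point of $X$ lies on a trisecant, so every admissible centre collapses a fibre. So the missing step is not merely delicate; as formulated it is false, and a completion must argue differently. For the classical case $s = r-1$ the paper's own ``properness'' discussion (which appears just after this lemma) supplies exactly what you need: any $(r+m, s+m)$-set with more than $r+m$ points is proper, and for $s = r-1$ properness forbids any $j$-space from containing $j+2$ points --- in particular there are no trisecants at all, the projection from the span of any $m$ points of $X$ is injective off the centre and meets $X$ in exactly those $m$ points, while the improper case is trivial because $B \geq r$ (any $r$ points form an $(r,s)$-set, so $|X| - m \leq r \leq B$). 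Invoking that would have closed your gap at least for $s = r-1$. In fairness, the paper's two-line proof is equally silent about fibres and does not address the configurations above either; you correctly isolated the one nontrivial point of the argument --- you just did not resolve it.
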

\begin{proof}
  Let $S$ be the subspace spanned by $m$ points of $X$.
  The projection of $X \setminus S$ onto a complement of $S$
  is an $(r, s)$-set in $\PG(n, q)$.
\end{proof}

Recall for the following that an $(r,s)$-set 
with $r \leq s$ contains at most $r$ points.
If an $(r,s)$-set $X$ meets an $m$-space $S$ in $d$ points
of $X$ for some $d \leq r$, then in the quotient of $S$
we find an $(r-d,s-m-1)$-set $X'$ of size $|X|-d$. 
If $s-m-1 \geq r-d$, then $|X'| \leq r-d$, 
so $|X| \leq r$. We call an $(r,s)$-set $X$ {\it proper}
if no $m$-space contains $m+1+r-s$ (or more) points.
Any $(r,s)$-set $X$ with $|X| > r$ is proper.


\begin{proof}[Proof of Theorem \ref{lem:upper_bnd1}]
%
  Any $e=s/d+1$ points of $X$ span a subspace of dimension at
  most $e-1$.
  Let $Y_1, \ldots, Y_d \subseteq X$ sets of size $e$.
  Let $\tilde{Y}_i$ denote the set of points of $\PG(n, q)$
  in $\< Y_i \> \setminus X$, but not in the span of any $(e-1)$-subset 
  of $Y_i$.
  
  Put $y = |\bigcup_{i=1}^d Y_i|$.
  We claim that $\bigcap_{i=1}^d \tilde{Y}_i$ is empty:
  Otherwise, $\bigcup_{i=1}^d Y_i$ spans a subspace of dimension
  at most $(y - d + 1) - 1 = y-d$ that contains at least $y$ points
  of $X$. But $y-d+1+r-s = y$, so $X$ is not proper, thus $|X| \leq r$ 
  and we are done.
  
  Hence, a point $P$ not in $X$ lies in at most 
  $d-1$ sets $\tilde{Y}_i$.
  Double count $e$-tuples $Y \subseteq X$
  and points $P$ not in $X$ with $P \in \< Y\>$. We obtain that
  \[
    \binom{|X|}{e} \left(\gauss{e}{1} - \binom{e}{e-1} \gauss{e-1}{1}\right) 
    \leq (d-1) \left(\gauss{n+1}{1} - |X|\right).
  \]
  Using  $1 \leq \gauss{m}{1}/q^{m-1} \leq 1+2q^{-1} \leq 2$, we obtain 
  \[
   \frac{(|X|-e)^e}{e!} q^{e-1} \left(1 - 2eq^{-1}  \right) 
    \leq (1+2q^{-1}) (d-1) q^n.
  \]
  Rearranging for $|X|$ yields the assertion.
  The special cases are for $d=2$ and $e=2$.
\end{proof}

\begin{Lemma}\label{lem:upper_bnd4}
  Let $X$ be a $(r, s)$-set in $\PG(n, q)$ with $s \geq 1$.
  Then $|X| \leq r \left( \gauss{n-s+1}{1} + 1\right)$.
\end{Lemma}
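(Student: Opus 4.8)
The plan is to cover $\PG(n, q)$ by a \emph{fibering} of $s$-spaces all sharing a common $(s-1)$-space and then bound the number of points of $X$ in each fiber by $r$. First I would dispose of the degenerate case $n < s$: then $\PG(n,q)$ is itself the only $s$-space, so $|X| \leq r \leq r\bigl(\gauss{n-s+1}{1}+1\bigr)$ and there is nothing to prove. Hence assume $n \geq s$ and fix any $(s-1)$-dimensional subspace $\Pi$ of $\PG(n,q)$. The $s$-spaces containing $\Pi$ correspond bijectively to the points of the quotient $\PG(n,q)/\Pi \cong \PG(n-s, q)$, so there are exactly $N \coloneq \gauss{n-s+1}{1}$ of them; call them $S_1, \ldots, S_N$. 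These fibers cover all of $\PG(n,q)$: a point $Q \notin \Pi$ lies in the unique $s$-space $\< \Pi, Q\>$ through $\Pi$, while the points of $\Pi$ lie in every $S_i$. In particular, the points of $\PG(n,q)$ outside $\Pi$ are partitioned by the sets $S_i \setminus \Pi$.

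With this partition in hand, I would split the count as
\[
  |X| = |X \cap \Pi| + \sum_{i=1}^N |X \cap (S_i \setminus \Pi)|.
\]
Since $n \geq s$, the subspace $\Pi$ is contained in some $s$-space, which meets $X$ in at most $r$ points; hence $|X \cap \Pi| \leq r$. For each fiber, $|X \cap (S_i \setminus \Pi)| \leq |X \cap S_i| \leq r$ because $S_i$ is an $s$-space and $X$ is an $(r,s)$-set. Summing gives $|X| \leq r + N r = r\bigl(\gauss{n-s+1}{1} + 1\bigr)$, as claimed. The additive $+1$ is simply the cost of accounting for $X \cap \Pi$ by the crude bound $r$ rather than tracking the overlap between $\Pi$ and the fibers.

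I expect no genuine obstacle here; the only points requiring care are routine. One must confirm the bijection between $s$-spaces through $\Pi$ and points of the quotient $\PG(n-s,q)$ (so that the number of fibers is precisely $\gauss{n-s+1}{1}$), and verify that the fibers $S_i \setminus \Pi$ genuinely partition the points outside $\Pi$ so that no point of $X \setminus \Pi$ is double-counted. A minor remark worth including is that a slightly finer accounting — writing $|X \cap (S_i \setminus \Pi)| = |X \cap S_i| - |X \cap \Pi| \leq r - |X\cap \Pi|$ and summing — actually removes the additive term and yields the sharper $|X| \leq r\gauss{n-s+1}{1}$; the stated bound follows a fortiori, and I would present the shorter version for cleanliness.
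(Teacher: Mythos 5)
Your proposal is correct and takes essentially the same approach as the paper: the paper's proof also fixes a fixed subspace and bounds $|X|$ over the pencil of $\gauss{n-s+1}{1}$ $s$-spaces through it, each meeting $X$ in at most $r$ points (indeed, the paper's one-line proof appears to contain a typo, speaking of an ``$(r-1)$-space'' meeting $X$ in at least $s$ points, where your choice of an $(s-1)$-space $\Pi$ is the intended one). Your closing observation that the finer accounting already yields the sharper bound $|X| \leq r \gauss{n-s+1}{1}$ is also correct.
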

\begin{proof}
  Fix a $(r-1)$-space $S$ which intersects $X$ in at least $s$ points.
  Then each $s$-space $T$ in $\PG(n, q)$ through $S$ contains at most
  $r$ elements of $X$. There are $\gauss{n-s+1}{1}$ such $T$.
\end{proof}

\section{Planes} \label{sec:planes}

\subsection{Plane Sets in \texorpdfstring{$\PG(6, q)$}{PG(6, q)}} \label{subsec:planes32}

We construct a $(3,2)$-set of size $(1+o(1)) q^{3/2}$ in $\PG(6, q)$.
The (trivial) lower bound from Lemma \ref{lem:lower_bnd2} is $\Omega( q^{4/3})$,
while the (trivial) upper bound from Theorem \ref{lem:upper_bnd1}
is $O( q^{ 5/2})$.
Put \[X = \{ (1, x, x^2, x^3, y, y^2, y^3): x, y \in \Ff_q \}.\]
Clearly, $|X| = q^2$.

\begin{Lemma}\label{lem:char_C4}
  Let $s_1, s_2, s_3, s_4 \in X$ be four pairwise distinct points in a plane
  of $\PG(6, q)$ with $s_i = (1, x_i, x_i^2, x_i^3, y_i, y_i^2, y_i^3)$.
  Then it holds that $|\{ x_1, x_2, x_3, x_4\}| = |\{ y_1, y_2, y_3, y_4\}| = 2$.
\end{Lemma}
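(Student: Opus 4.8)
The plan is to translate the geometric hypothesis into a linear-algebraic one and then exploit the two moment curves embedded in the coordinates. Writing $s_i$ as the $i$-th row of a $4\times 7$ matrix $M$ over $\Ff_q$, the four points lie in a common plane precisely when the linear span of the lifts $s_i$ has dimension at most $3$, i.e. $\mathrm{rank}(M)\le 3$. Since $M$ has four rows, this is equivalent to the existence of a nontrivial relation $\sum_{i=1}^4 \lambda_i s_i = 0$ with $(\lambda_1,\dots,\lambda_4)\neq 0$. Reading off the coordinates, and using that the first coordinate supplies the degree-$0$ term for both blocks, this single relation is equivalent to the two systems of moment equations $\sum_{i=1}^4 \lambda_i x_i^{\,j}=0$ and $\sum_{i=1}^4 \lambda_i y_i^{\,j}=0$ for $j=0,1,2,3$.

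The engine of the argument is the following elementary fact. Suppose scalars $t_1,\dots,t_4$ carry weights $\lambda_i$ with $\sum_i \lambda_i t_i^{\,j}=0$ for $j=0,1,2,3$. Grouping the indices by the common value of $t_i$ and letting $a_1,\dots,a_m$ be the distinct values, the grouped weights $\mu_k=\sum_{i:\,t_i=a_k}\lambda_i$ satisfy $\sum_k \mu_k a_k^{\,j}=0$ for $j=0,\dots,m-1$; as the $a_k$ are distinct and $m\le 4$, the $m\times m$ Vandermonde matrix is invertible and hence every $\mu_k=0$. In particular, if all four $t_i$ are distinct then all $\lambda_i=0$. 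I would apply this to the $x$-block and to the $y$-block separately. Non-triviality of $(\lambda_i)$ then forces the $x_i$ not to be pairwise distinct (so $|\{x_i\}|\le 3$) and, for such a configuration, forces the sum of the $\lambda_i$ over each $x$-fibre to vanish; the same holds for the $y_i$.

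It remains to pin the two counts down to exactly $2$, and this is where the hypothesis that the four points are \emph{distinct} does the work. If all $x_i$ coincide, then distinctness forces the four $y_i$ to be pairwise distinct, contradicting $|\{y_i\}|\le 3$; hence $|\{x_i\}|\ge 2$, and symmetrically $|\{y_i\}|\ge 2$. The remaining danger is $|\{x_i\}|=3$, where the $x$-fibres have sizes $2,1,1$; the vanishing fibre-sums then kill the two singleton weights and leave $\lambda_i=-\lambda_j\neq 0$ on the doubled value (say $x_i=x_j$). Feeding this into the first-moment equation $\sum_\ell \lambda_\ell y_\ell=0$ gives $\lambda_i(y_i-y_j)=0$, so $y_i=y_j$; together with $x_i=x_j$ this makes $s_i=s_j$, contradicting distinctness. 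Thus $|\{x_i\}|=2$, and the identical argument with the roles of $x$ and $y$ exchanged gives $|\{y_i\}|=2$.

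I expect the main obstacle to be conceptual rather than computational: recognizing that the plane condition yields a \emph{single} weight vector which must simultaneously annihilate both degree-$3$ moment systems, and then being disciplined in the case analysis so that the distinctness hypothesis is invoked exactly where it is needed (to exclude the all-equal pattern and the $2,1,1$ fibre pattern). The degree-$3$ truncation is precisely what makes the Vandermonde step available for up to four distinct nodes, so no information beyond the stated coordinates is required.
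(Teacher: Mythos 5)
Your proof is correct and takes essentially the same approach as the paper: both reduce coplanarity of the four lifted points to a rank-$\leq 3$ condition and exploit the Vandermonde structure of the two degree-$3$ moment blocks, using distinctness of the $s_i$ to push $|\{x_i\}|$ and $|\{y_i\}|$ down to exactly $2$. If anything, your explicit dependency-vector and fibre-sum formulation rigorously fills in the step the paper states only tersely (that $x_4 = x_i$ forces $y_4 = y_i$, via the vanishing singleton weights and the first $y$-moment equation).
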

\begin{proof}
  As $s_1, s_2, s_3, s_4$ lie in a plane,
  the matrix
  \[
   M = \begin{pmatrix}
      1 & 1 & 1 & 1 \\
      x_1 & x_2 & x_3 & x_4 \\
      x_1^2 & x_2^2 & x_3^2 & x_4^2 \\
      x_1^3 & x_2^3 & x_3^3 & x_4^3 \\
      y_1 & y_2 & y_3 & y_4 \\
      y_1^2 & y_2^2 & y_3^2 & y_4^2 \\
      y_1^3 & y_2^3 & y_3^3 & y_4^3 \\
   \end{pmatrix}
  \]
  has at most rank $3$.
  The top-left $4 \times 4$ submatrix of $M$
  is a Vandermonde matrix of rank at most $3$.
  Hence, if the $x_1, x_2, x_3$ are pairwise distinct,
  then $x_4 \in \{ x_1, x_2, x_3 \}$.
  If $x_4 = x_i$ for some $i \in \{1, 2, 3\}$,
  then also $y_4 = y_i$.
  But then $s_4 = s_i$ which is a contradiction.
  Hence, $|\{ x_1, x_2, x_3, x_4\}| \leq 2$,
  and similarly, $|\{ y_1, y_2, y_3, y_4\}| \leq 2$.
  If any of the inequalities above is not obtained,
  then again two $s_i$ are identical, so this does not happen.
\end{proof}

Let $G$ be a bipartite $C_4$-free graph with $(1+o(1)) q^{3/2}$
edges and $q$ vertices in each half of the bipartition.
Such graphs exists, for instance see \cite[\S3.1]{FS2013}.
Identify the points and lines of $G$ 
with two distinct copies of $\Ff_q$.
Put \[ X' = \{ (1, x, x^2, x^3, y, y^2, y^3): x, y \in \Ff_q,\, 
xy \text{ is an edge of } G \}.\]

\begin{Theorem}\label{thm:q32constr}
  We have $|X'| = (1+o(1))q^{3/2}$ and $X'$ is a $(3,2)$-set.
\end{Theorem}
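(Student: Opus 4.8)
The plan is to verify the two claims separately. The size claim $|X'| = (1+o(1))q^{3/2}$ is immediate: the map $(x,y) \mapsto (1, x, x^2, x^3, y, y^2, y^3)$ is injective on $\Ff_q \times \Ff_q$ (the first two coordinates recover $x$, the fifth recovers $y$), so $|X'|$ equals the number of edges of $G$, which is $(1+o(1))q^{3/2}$ by construction. The substance is showing that $X'$ is a $(3,2)$-set, i.e.\ that no plane of $\PG(6,q)$ contains four points of $X'$.

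For the main claim I would argue by contradiction: suppose four pairwise distinct points $s_1, s_2, s_3, s_4 \in X'$ lie in a common plane. Since $X' \subseteq X$, I can invoke Lemma \ref{lem:char_C4}, which tells me that among the four points the $x$-coordinates take exactly two distinct values and likewise the $y$-coordinates take exactly two distinct values. The key step is then to translate this coordinate structure into a forbidden configuration in $G$. Writing the two $x$-values as $a, b$ and the two $y$-values as $c, d$, each $s_i$ is determined by an $(x_i, y_i) \in \{a,b\} \times \{c,d\}$, and distinctness of the four points forces these four pairs to be exactly the four elements of $\{a,b\} \times \{c,d\}$. But every pair $(x_i, y_i)$ corresponds to an edge $x_i y_i$ of $G$, so the edges $ac, ad, bc, bd$ are all present in $G$; together with the vertices $a,b$ on one side and $c,d$ on the other, this is precisely a $C_4$ in the bipartite graph $G$, contradicting that $G$ is $C_4$-free.

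The one point that requires genuine care — and where I expect the main obstacle to lie — is pinning down that the four pairs $(x_i, y_i)$ really exhaust $\{a,b\}\times\{c,d\}$ rather than, say, repeating some pairs or degenerating (for instance if $a=b$ or $c=d$). Lemma \ref{lem:char_C4} gives exactly two distinct $x$-values and exactly two distinct $y$-values, so $a\neq b$ and $c\neq d$; and since the four points $s_i$ are pairwise distinct and each $s_i$ is determined by the single pair $(x_i,y_i)$, the four pairs must be pairwise distinct. Four distinct pairs drawn from a $2\times 2$ product set must be the whole product set, which seals the argument. I would also remark that this matches the intended design of $X$: the two cubic-curve blocks are included precisely so that a plane containing four points forces a complete bipartite pattern between the $x$-values and the $y$-values, converting the geometric constraint into the graph-theoretic $C_4$ condition.
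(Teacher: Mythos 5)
Your proposal is correct and takes essentially the same approach as the paper: both apply Lemma \ref{lem:char_C4} to four coplanar points of $X'$ and then observe that the resulting $2\times 2$ pattern of $x$- and $y$-values corresponds to a quadrangle $C_4$ in $G$, contradicting $C_4$-freeness. Your only addition is to spell out explicitly why the four distinct pairs $(x_i,y_i)$ must exhaust $\{a,b\}\times\{c,d\}$, a detail the paper compresses into a ``WLOG'' statement.
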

\begin{proof}
  The graph $G$ has $(1+o(1))q^{3/2}$ edges,
  so $|X'| = (1+o(1))q^{3/2}$.
  Lemma \ref{lem:char_C4} says that four point $s_1, s_2, s_3, s_4$ in 
  $X$ which lie in a plane are precisely those
  with WLOG $x_1 = x_2$, $x_3 = x_4$, $y_4 = y_1$, and $y_2 = y_3$ 
  (in the notation of Lemma \ref{lem:char_C4}).
  This does not happen in $X'$ as 
  $(x_1, y_1), (x_1, y_2), (x_3, y_2), (x_3, y_1)$
  would correspond to a quadrangle in $G$.
\end{proof}
  
Now assume that $q = q_0^2$ for some prime power $q_0$.
In this case there exists a natural choice for $G$.
Let $G'$ be the incidence graph of $\PG(2, q_0)$.
It is well-known that the graph $G$ is $C_4$-free.
Remove all points on a fixed line $L$ from $G$,
and remove all lines on a fixed point on $L$ from $G$.
Then $G'$ has $q$ lines and $q$ points.
(If one wishes for a completely explicit construction,
then identifying $\Ff_q$ with $\Ff_{q_0}^2$ is
a natural choice.)
Let $X''$ be the point set $X'$ with $G'$ for $G$.

\begin{Corollary}
  For $q$ a square, we have $|X''| = q^{3/2}$ and $X''$ is a $(3,2)$-set.
\end{Corollary}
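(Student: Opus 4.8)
The plan is to verify the two assertions separately: that $X''$ is a $(3,2)$-set, and that it has exactly $q^{3/2}$ points. The first follows almost verbatim from Theorem~\ref{thm:q32constr}, since the only property of the graph used in that proof is that it is bipartite and $C_4$-free with the two sides identified with copies of $\Ff_q$. The modified incidence graph $G'$ is bipartite by construction, and its $C_4$-freeness is the standard fact that two distinct points of a projective plane lie on a unique common line (equivalently, two distinct lines meet in a unique point), so no point--line--point--line $4$-cycle can occur; deleting vertices preserves this property. After deleting a line's worth of points and a point's worth of lines, each side of $G'$ has $q_0^2 = q$ elements, so the two sides can be identified with $\Ff_q$ and the proof of Theorem~\ref{thm:q32constr} applies unchanged to show that $X''$ is a $(3,2)$-set.

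It remains to count $|X''|$, which equals the number of edges of $G'$. First I would record that the surviving points are exactly the $q_0^2 = q$ points off $L$, and the surviving lines are exactly the $q_0^2 = q$ lines avoiding $P$. The cleanest route to the edge count is a degree computation: fix a surviving point $A$, so $A \notin L$ and in particular $A \neq P$. In $\PG(2, q_0)$ the point $A$ lies on $q_0 + 1$ lines, and exactly one of them, the line $AP$, passes through $P$. Hence exactly $q_0$ of the lines through $A$ survive, so $A$ has degree $q_0$ in $G'$. Summing over the $q_0^2$ surviving points gives $q_0^2 \cdot q_0 = q_0^3 = q^{3/2}$ edges, that is, $|X''| = q^{3/2}$.

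Since this is essentially a bookkeeping computation, I do not expect a genuine obstacle; the only point requiring care is the edge count, where one must use that the deleted point $P$ lies on the deleted line $L$ so that the two deletions interact cleanly, each surviving point losing exactly one incident line, namely $AP$. As a sanity check one can count from the other side: a surviving line $m$, with $P \notin m$, meets $L$ in a single point, which is necessarily distinct from $P$ and hence deleted, so $m$ retains exactly $q_0$ of its $q_0 + 1$ points; this again yields $q_0^2 \cdot q_0 = q_0^3$ edges, confirming the count.
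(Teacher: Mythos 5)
Your proposal is correct and follows exactly the route the paper intends (the paper leaves this corollary without an explicit proof, treating it as immediate from the construction of $G'$ and Theorem~\ref{thm:q32constr}): you verify that $G'$ is bipartite, $C_4$-free, with sides of size $q$, and then count edges exactly. Your degree computation, including the observation that $P \in L$ is what guarantees every surviving point is distinct from $P$ and hence loses exactly one incident line, is precisely the bookkeeping needed to upgrade the theorem's $(1+o(1))q^{3/2}$ to the exact count $q_0^3 = q^{3/2}$.
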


%

\subsection{Random \texorpdfstring{$(4,2)$}{(4,2)}-sets from Quadrics}

Theorem \ref{lem:upper_bnd1} gives $(\sqrt{2}+o(1)) q^{\frac{n-1}{2}}$
as an upper bound for $(3,2)$-sets in $\PG(n, q)$, that is a set of points with at most 3
in each plane. Here we show that one can obtain examples of that
size if we allow up to 4 points on a plane, so $(4,2)$-sets.
Note that for $(4,2)$-sets we only have Lemma \ref{lem:upper_bnd4}
which states an upper bound of $O(q^{n-2})$.


\begin{Theorem}
  Let $m \geq 2$ and put $n=2m-1$.
  There exists a $(4,2)$-set in $\PG(n, q)$ 
  of size $(1+o(1)) q^{\frac{n-1}{2}}$.
\end{Theorem}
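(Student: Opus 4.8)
The plan is to realize the desired set as a large subset of the common zero set $V = Q_1 \cap \cdots \cap Q_m$ of $m$ quadrics $Q_1,\dots,Q_m$ chosen independently and uniformly at random in $\PG(n,q)=\PG(2m-1,q)$, and then to clean up $V$ by an alteration argument. The expected size is immediately of the right order: the quadratic forms in $2m$ variables form a vector space of fixed dimension, a uniformly random quadric contains a fixed point with probability $(1+o(1))q^{-1}$ (one linear condition on the coefficients), and by independence a fixed point lies on all $m$ quadrics with probability $(1+o(1))q^{-m}$. Linearity of expectation then gives $\Ef|V| = \gauss{2m}{1}\cdot(1+o(1))q^{-m} = (1+o(1))q^{m-1} = (1+o(1))q^{(n-1)/2}$, which is exactly the target.

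The geometric heart of the argument is the claim that whenever $V$ meets a plane $\pi$ in a \emph{finite} set, it meets it in at most $4$ points; this is precisely where $r=4$ enters. Fix a plane $\pi$ and set $C_i = \pi \cap Q_i$, a conic in $\pi$ (or all of $\pi$, if $Q_i \supseteq \pi$), so that $\pi \cap V = \bigcap_i C_i$. Suppose this intersection is finite with at least $5$ points. Since it is finite, not all $C_i$ equal $\pi$; moreover at least two of them, say $C_j, C_k$, are genuine conics sharing no common component. Indeed, if every pair of genuine conics shared a component, then the $\ge 5$ common points would force at least $4$ of them onto one shared line $\ell$ (two conics sharing a line meet off that line in at most one point), and a conic through three collinear points contains the whole line, so every $C_i$ would contain $\ell$ and $\pi\cap V$ would be infinite. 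Bézout's theorem now yields $|\pi \cap V| \le |C_j \cap C_k| \le 4$, a contradiction. Hence the only planes on which $V$ can carry more than $4$ points are the \emph{bad} planes, those $\pi$ for which $\pi \cap V$ is infinite, equivalently for which $V$ contains a line or an irreducible conic lying inside $\pi$.

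It remains to show that bad planes are rare and to delete their points. A fixed line lies in a random quadric with probability $(1+o(1))q^{-3}$ (its restriction is a binary quadratic form, whose vanishing is three linear conditions), so the expected number of lines of $V$ is at most $\gauss{2m}{2}\cdot(1+o(1))q^{-3m} = O(q^{m-4})$. Likewise a fixed irreducible conic lies in a random quadric with probability $(1+o(1))q^{-5}$, and since there are at most $\gauss{2m}{3}\cdot O(q^5)$ conics, the expected number of conics of $V$ is also $O(q^{m-4})$. Thus the expected number of bad planes is $O(q^{m-4})$, and as each plane carries at most $O(q)$ points of $V$, the expected number of points of $V$ on bad planes is $O(q^{m-3}) = o(q^{m-1})$. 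Therefore $\Ef\bigl[\,|V| - \#\{\text{points of }V\text{ on bad planes}\}\,\bigr] = (1+o(1))q^{m-1}$, so we may fix quadrics $Q_1,\dots,Q_m$ for which deleting from $V$ every point lying on a bad plane leaves a set $V'$ of size $(1+o(1))q^{m-1}$. By the previous paragraph $V'$ meets every plane in at most $4$ points, i.e.\ $V'$ is a $(4,2)$-set, and trimming $V'$ if necessary gives one of size exactly $(1+o(1))q^{(n-1)/2}$.

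The main obstacle is the case analysis in the geometric core: one must confirm that a finite intersection of the $C_i$ is bounded by $4$ in every degenerate configuration (reducible conics, repeated line components, and quadrics containing $\pi$), and correctly identify the exceptional planes as exactly those on which $V$ contains a line or a conic. The probabilistic half is comparatively routine; the only points needing care are the exact vanishing probabilities $q^{-3}$ and $q^{-5}$ (independence of the relevant linear conditions and the negligible contribution of degenerate forms) and the crude bound $O(q)$ for $|\pi\cap V|$ on a bad plane, both of which suffice to make $\Ef|V| = (1+o(1))q^{m-1}$ and the bad-plane loss $o(q^{m-1})$ precise.
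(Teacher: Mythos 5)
Your construction (the intersection $V=Q_1\cap\cdots\cap Q_m$ of $m$ independent random quadrics) and your geometric core (a plane whose intersection with $V$ contains no line or conic of $V$ carries at most $4$ points, via B\'ezout and the shared-component analysis) coincide with the paper's, and that part is sound --- if anything, more carefully argued than in the paper. The genuine gap is in your alteration step. You define a plane to be \emph{bad} when $\pi\cap V$ is infinite, claim the expected number of bad planes is $O(q^{m-4})$, and delete every point of $V$ lying on a bad plane. But bad planes are vastly more numerous than the lines and conics contained in $V$: if a single line $\ell$ lies in $V$, then \emph{every} one of the roughly $q^{2m-3}$ planes through $\ell$ is bad; worse, every point $P\in V\setminus\ell$ lies on the bad plane $\langle\ell,P\rangle$, so your deletion rule erases all of $V$. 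Since the expected number of lines contained in $V$ is $\Theta(q^{m-4})$, for $m\geq 5$ such a line exists with high probability and your rule deletes essentially everything; already for $m=4$ the expected loss, computed by counting incident pairs $(P,\ell)$ with $P\in V$ and $\ell\subseteq V$ (about $q^{6m-5}\cdot q^{-4m}=q^{2m-5}$), has the same order $q^{m-1}$ as $\Ef|V|$, so the claimed bound ``points on bad planes $=O(q^{m-3})=o(q^{m-1})$'' fails. Only for $m=2,3$ does your argument survive as written, because there with high probability no line or conic lies in $V$ at all and hence there are no bad planes.

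The repair is the paper's deletion rule: remove only the points of $V$ lying \emph{on} lines contained in $V$ and \emph{on} conics contained in $V$ (equivalently, on planes meeting $V$ in a full conic), an expected loss of $O(q)\cdot O(q^{m-4})=O(q^{m-3})=o(q^{m-1})$, rather than all points of all planes containing such a curve. Your own component analysis then closes the argument: on a bad plane the common component is contained in every $Q_i$, hence in $V$, so its points are deleted; writing $C_i=\pi\cap Q_i=\ell\cup\ell_i$ for a shared line $\ell$, the residual common points lie in $\bigcap_i\ell_i\setminus\ell$, which is at most one point unless all $\ell_i$ coincide, in which case they form a second deleted line (the shared-conic and $\pi\subseteq Q_i$ cases are analogous), while good planes keep at most $4$ points by your B\'ezout step. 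One further small slip: your bound of $O(q)$ points of $V$ per plane fails for planes contained in all $m$ quadrics ($q^2+q+1$ points), but the expected number of such planes is $O(q^{6m-9}\cdot q^{-6m})=O(q^{-9})$, so this is negligible once noted.
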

\begin{proof}
  Let $Q_1, \ldots, Q_{m}$ be random irreducible quadrics.
  Put $X = \bigcap_{i=1}^{m} Q_i$.
  As $Q_i$ has $(1+o(1)) q^{n-1}$ points, 
  we find $\Ef(|X|) = (1+o(1))q^{n-m} = (1+o(1))q^{m-1}$.
  Recall that a conic is determined by 5 points,
  two conics intersect in at most $4$ points,
  and that there are $(1+o(1))q^5$ conics in $\PG(2, q)$.
  
  Let $C_1$ and $C_2$
  be random conics. Then
  \begin{align*}
   \Pf( |C_1 \cap C_2| > 4 ) = \Pf( |C_1 \cap C_2| = q+1) = (1+o(1)) q^{-5}.
  \end{align*}
  Let $\cC$ be the set of planes which intersect $X$
  in a conic plane. There are $(1+o(1)) q^{3(n-2)}$
  conic planes in $Q_1$. 
  Hence, 
  \[
    \Ef(|\cC|) = (1+o(1)) q^{3(n-2)} \cdot (1+o(1)) q^{-5(m-1)} = (1+o(1)) q^{m-4}.
  \]
  Hence, the expected number of points of $X$ in a
  conic plane is at most $(1+o(1))q^{m-3}$
  as each of them has at most $q+1$ points in $X$.
  
  Let $L_1$ and $L_2$ be random singular lines in two of the quadrics.
  Then $\Pf(L_1 = L_2) = (1+o(1)) q^{-3}$.
  Let $\cL$ be the set of lines which are completely contained in $X$.
  There are $\gauss{n+1}{2} = (1+o(1)) q^{2(n-1)}$ lines in $\PG(n-1, q)$,
  so 
  \[
    \Ef(|\cL|) = (1+o(1))q^{2(n-1)} \cdot (1+o(1))q^{-3m} = q^{m-4}.
  \]
  Hence, we can delete all points of $X$ in conic planes and 
  complete lines to obtain some $X'$ with $\Ef(|X'|) = (1+o(1)) \Ef(|X|) = (1+o(1))q^{m-1}$.
\end{proof}

\subsection{Random \texorpdfstring{$(9,2)$}{(9,2)}-sets from Cubic Curves}

Lemma \ref{lem:upper_bnd4} gives $O(q^2)$ as an upper bound for a $(9,2)$-set
in $\PG(4, q)$. Dvir and Lovett show that there exists a $(16,2)$-set. Here we show that cubic polynomials give rise to a $(9,2)$-set of size $(1+o(1)) q^2$.

\begin{Lemma}\label{lem:cub_red_cnt}
  In $\PG(2, q)$ the number of irreducible cubic curves is $(1+o(1)) q^9$,
  the number of reducible cubic curves containing a conic is $O(q^7)$,
  and the number of reducible cubic curves containing no conic is $O(q^6)$.
\end{Lemma}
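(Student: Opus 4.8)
The plan is to count cubic curves projectively, identifying a cubic curve with a nonzero ternary cubic form up to a scalar (this is the right notion for the later application to random cubics, and the $(1+o(1))$ factors are insensitive to whether one mods out by scalars). The space of homogeneous cubics in $X_0, X_1, X_2$ has dimension $\binom{5}{2} = 10$, so the total number of cubic curves is $\frac{q^{10}-1}{q-1} = (1+o(1))q^9$. Since a cubic form has degree $3$, it is reducible over $\Ff_q$ if and only if it admits an $\Ff_q$-rational linear factor, and the complementary factor is then either an irreducible conic or a product of two lines. Consequently every reducible cubic is either of the form (irreducible conic)$\,\cdot\,$(line), which is precisely the case of a reducible cubic containing a conic, or a product of three linear forms, which is the case containing no conic. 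The strategy is therefore to bound these two reducible families and subtract from the total.

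For the first family I would count pairs consisting of an irreducible conic and a line. The number of conics (nonzero ternary quadratic forms up to scalar) is $\frac{q^6-1}{q-1} = O(q^5)$, and the number of lines is $\frac{q^3-1}{q-1} = O(q^2)$. By unique factorization of polynomials over $\Ff_q$, the map sending a pair (conic, line), each taken up to scalar, to the class of its product is a well-defined bijection onto the cubics of this type, so their number is $O(q^5) \cdot O(q^2) = O(q^7)$.

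For the second family, a product of three linear forms is determined, up to scalar, by the unordered multiset of its three linear factors (again up to scalar). With $N = \frac{q^3-1}{q-1} = O(q^2)$ lines available, the number of such multisets is $\binom{N+2}{3} = O(q^6)$, which bounds this family. Finally, the irreducible cubics are exactly those that are neither reducible type, so their number is $(1+o(1))q^9 - O(q^7) - O(q^6) = (1+o(1))q^9$.

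The estimates themselves are routine; the only points that need care are the bookkeeping of scalars in the unique-factorization step, and fixing the convention that \emph{conic} means an \emph{irreducible} conic, which is what makes the two reducible families disjoint and jointly exhaustive. I expect this convention, rather than any numerical estimate, to be the genuinely delicate part: if reducible conics were allowed, a product of three lines would be double-counted as both a (reducible conic)$\,\cdot\,$(line) and a triple of lines, so the clean split of the reducible cubics into the two advertised families relies on reading the linear factor off first and only then classifying the degree-two cofactor.
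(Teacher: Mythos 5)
Your proof is correct, and its treatment of the reducible cubics coincides with the paper's: the paper splits them in exactly the same way into (line)$\cdot$(quadratic) and triples of lines, and invokes the same counts, $O(q^2)$ lines and $O(q^5)$ conics, to get $O(q^7)$ and $O(q^6)$. Where you genuinely differ is the count $(1+o(1))q^9$ of irreducible cubics: the paper obtains it as a lower bound by counting $9$-tuples of points in general position, each of which determines a unique cubic, so that the number of cubics is at least $(1+o(1))q^{2\cdot 9}/q^9 = (1+o(1))q^9$; you instead count all cubic forms up to scalar exactly, $\frac{q^{10}-1}{q-1}=(1+o(1))q^9$, and subtract the two reducible families. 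Your subtraction route is cleaner and more self-contained: it yields the upper and lower bounds in one stroke, whereas the paper's general-position argument leaves implicit both that almost all $9$-tuples are in general position and that a single cubic carries only $(1+o(1))q^9$ such tuples, and it still needs the dimension count for the matching upper bound. One small caveat in your classification: a degree-$2$ cofactor irreducible over $\Ff_q$ need not define a nondegenerate conic --- it can be a pair of conjugate lines over $\Ff_{q^2}$, whose rational zero set is a single point. If ``containing a conic'' is read geometrically, those cubics should sit in the ``no conic'' family; but there are only $O(q^2)\cdot O(q^4)=O(q^6)$ of them (a rational line times a conjugate line pair), so both stated bounds hold under either convention, and your closing remark about fixing the convention is well taken and harmless.
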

\begin{proof}
  Nine points determine
    a cubic curve. As $9$ points in general position determine a cubic, the number is
    at least
    \[
        (1+o(1)) \frac{q^{2 \cdot 9}}{q^9} = (1+o(1))q^9.
    \]
    Next we estimate the number of reducible cubic curves.
    These consist of three lines, or a line and a conic.
    Recall that there are $O(q^2)$ lines and 
    $O(q^5)$ quadratic curves in $\PG(2, q)$.
\end{proof}

\begin{Lemma}\label{lem:cubic_int}
 Let $C_1, C_2$ two random irreducible cubic curves in $\PG(2, q)$.
 Then 
 \[
    \Pf(|C_1 \cap C_2| > 9) \leq (1+o(1)) q^{-9}.
 \]
\end{Lemma}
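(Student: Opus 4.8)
The plan is to convert this into a deterministic intersection bound via B\'ezout's theorem and then read off the probability from the count of irreducible cubics. B\'ezout's theorem states that two plane curves of degrees $d_1$ and $d_2$ with no common component meet in at most $d_1 d_2$ points; for two cubics this gives at most $9$ common points over the algebraic closure, hence at most $9$ common $\Ff_q$-points. The key structural point is that two distinct irreducible cubics cannot share a component: any common geometric component forces the two irreducible forms to agree up to scalar, i.e.\ $C_1 = C_2$ (the case of a cubic reducible over $\overline{\Ff_q}$ needs a short extra argument, given below). Therefore $|C_1 \cap C_2| > 9$ can only occur when $C_1 = C_2$, so the event $\{|C_1 \cap C_2| > 9\}$ is contained in the event $\{C_1 = C_2\}$.

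It then remains to bound $\Pf(C_1 = C_2)$. Let $N$ denote the number of irreducible cubic curves in $\PG(2, q)$; by Lemma~\ref{lem:cub_red_cnt} we have $N = (1+o(1))q^9$. Since $C_1$ and $C_2$ are chosen independently and uniformly among these $N$ curves, $\Pf(C_1 = C_2) = 1/N = (1+o(1))q^{-9}$. Combining the two steps gives
\[
  \Pf(|C_1 \cap C_2| > 9) \leq \Pf(C_1 = C_2) = (1+o(1))q^{-9},
\]
which is the claim.

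The step requiring the most care is the common-component argument over the non-closed field $\Ff_q$, where \emph{irreducible} should be read as irreducible over $\Ff_q$. If a cubic is $\Ff_q$-irreducible but factors over $\overline{\Ff_q}$, its geometric factors form a single Galois orbit, and two such cubics sharing one geometric component must then share the whole orbit, again yielding $C_1 = C_2$; so the reduction to $\{C_1 = C_2\}$ is unaffected. I expect this Galois bookkeeping, rather than B\'ezout itself, to be the only genuine subtlety. As a remark, the displayed inequality is in fact an equality for large $q$: if $C_1 = C_2$ is an irreducible cubic, then it has $q + O(\sqrt q)$ rational points by Hasse--Weil, which exceeds $9$, so the two events coincide.
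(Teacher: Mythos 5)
Your proof is correct and takes essentially the same route as the paper's: B\'ezout reduces the event $\{|C_1 \cap C_2| > 9\}$ to $\{C_1 = C_2\}$, whose probability is $(1+o(1))q^{-9}$ by the count of irreducible cubics in Lemma~\ref{lem:cub_red_cnt}; your Galois-orbit bookkeeping for $\Ff_q$-irreducible but geometrically reducible cubics merely makes explicit what the paper leaves implicit. (Only your closing aside overreaches: Hasse--Weil applies to \emph{geometrically} irreducible cubics, and an $\Ff_q$-irreducible cubic that splits into three conjugate lines has at most one rational point, so the two events need not coincide --- but this does not affect the claimed inequality.)
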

\begin{proof}
    By B\'ezout's Theorem, $|C_1 \cap C_2| \leq 9$
    or $C_1 = C_2$. Lemma \ref{lem:cub_red_cnt} shows the assertion.
\end{proof}

\begin{Theorem}
  There exists a $(9,2)$-set in $\PG(4, q)$ 
  of size $(1+o(1)) q^2$.
\end{Theorem}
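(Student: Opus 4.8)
The plan is to adapt the quadric construction of the previous subsection, replacing quadrics by cubic hypersurfaces and the conic intersection bound by the B\'ezout bound of Lemma \ref{lem:cubic_int}. Let $V_1, V_2$ be two independent uniformly random cubic hypersurfaces in $\PG(4, q)$, which we may assume to be irreducible and to contain no plane, as the complementary events have probability $o(1)$ and will contribute negligibly to every count below. Put $X \coloneq V_1 \cap V_2$. Since $P \in V_i$ is a single linear condition on the cubic form, $\Pf(P \in V_i) = (1+o(1)) q^{-1}$ for each point $P$, so by linearity $\Ef(|X|) = \gauss{5}{1} (1+o(1)) q^{-2} = (1+o(1)) q^2$.

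First I would reduce the $(9,2)$-condition to a statement about plane cubics. For a fixed plane $\pi$ the restrictions $C_i \coloneq V_i \cap \pi$ are cubic curves in $\pi \cong \PG(2, q)$ with $X \cap \pi = C_1 \cap C_2$, so by B\'ezout's theorem (cf.\ Lemma \ref{lem:cubic_int}) we have $|X \cap \pi| \leq 9$ unless $C_1$ and $C_2$ share a component. A shared component has degree $1$, $2$, or $3$: in the first two cases it is a line or a conic lying in $X$, and in the third case $C_1 = C_2$. This isolates three families of points to delete: the set $\cL$ of lines contained in $X$, the set $\cD$ of conics contained in $X$, and the set $\cC$ of planes on which $C_1 = C_2$.

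Next I would bound these families in expectation. A cubic hypersurface contains a fixed line with probability $(1+o(1)) q^{-4}$ (four coefficients of a binary cubic must vanish) and a fixed conic with probability $(1+o(1)) q^{-7}$ (its restriction must be divisible by the conic), while $\PG(4, q)$ contains $(1+o(1)) q^6$ lines and $(1+o(1)) q^{11}$ conics; hence $\Ef(|\cL|) = (1+o(1)) q^{-2}$ and $\Ef(|\cD|) = (1+o(1)) q^{-3}$. For a fixed plane $\Pf(C_1 = C_2) = (1+o(1)) q^{-9}$, and with $(1+o(1)) q^6$ planes this gives $\Ef(|\cC|) = (1+o(1)) q^{-3}$. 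Every line, conic, or cubic plane carries only $O(q)$ points of $X$, so the expected number of deleted points is $o(1)$ and the pruned set $X'$ still has $\Ef(|X'|) = (1+o(1)) q^2$.

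Finally I would verify that $X'$ is a $(9,2)$-set and then select a realization meeting the expected size. Fix a plane $\pi$. If $C_1, C_2$ share no component then $|X \cap \pi| \leq 9$ already; if $C_1 = C_2$ then all of $X \cap \pi$ has been removed; otherwise $C_1$ and $C_2$ share a proper part $G$ of degree $1$ or $2$, and writing $C_1 = G \cup H_1$, $C_2 = G \cup H_2$ with $H_1, H_2$ having no common component we get $X \cap \pi = G \cup (H_1 \cap H_2)$. Here $G$ is a line, a conic, or a pair of lines, all contained in $X$ and hence deleted, while $|H_1 \cap H_2| \leq (\deg H_1)(\deg H_2) \leq 4$ by B\'ezout, so $\pi$ now carries at most $9$ points. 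I expect the main obstacle to be exactly this decomposition: one must account for reducible (and plane-containing) restrictions $C_i$ and check that removing $\cL$, $\cD$, and $\cC$ really clears every over-full plane, which rests on the splitting $C_1 \cap C_2 = G \cup (H_1 \cap H_2)$ and the B\'ezout bound on the complementary parts $H_1, H_2$.
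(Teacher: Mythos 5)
Your proposal is correct and follows essentially the same route as the paper: intersect two independent random cubic hypersurfaces in $\PG(4, q)$, apply B\'ezout's theorem (Lemma \ref{lem:cubic_int}) to the restrictions on each plane, and prune the expected $o(q^2)$ points lying on degenerate configurations. The only difference is bookkeeping: where the paper classifies bad planes $Y_{i,j}$ by the largest subspace contained in each restricted cubic, using the reducibility counts of Lemma \ref{lem:cub_red_cnt}, you decompose via the common factor $G$ of the two restricted cubics and delete the lines, conics, and coincident planes globally --- an equivalent, and if anything more explicit, verification (via $C_1 \cap C_2 = G \cup (H_1 \cap H_2)$ and B\'ezout on $H_1, H_2$) that the pruning clears every over-full plane.
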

\begin{proof}
  Let $C_1, C_2$ be random cubic surfaces.
  Put $X = C_1 \cap C_2$.
  As $|C_1| = (1+o(1)) q^{n-1} = |C_2|$, $\Ef(|X|) = (1+o(1)) q^2$.
  Let $Y_{i,j}$ be the set of planes which 
  intersects $C_1 \cap C_2$ in more than $9$ points
  and have an $i$-space (respectively, $j$-space) 
  as the largest subspace contained in $C_1$ (respectively, $C_2$).
  By Lemma \ref{lem:cub_red_cnt}, 
  $\Ef(|Y_{2,j}|) = \gauss{5}{3} q^{-9} < (1+o(1)) q^{-1}$ 
  for $j \in \{ 0, 1, 2\}$.
  By Lemma \ref{lem:cub_red_cnt},
  $\Ef(|Y_{1,j}|) = \gauss{5}{3} q^{-2-4} < 1+o(1)$
  for $j \in \{ 0, 1 \}$.
  By Lemma \ref{lem:cubic_int}, $\Ef(|Y_{0,0}|) \leq \gauss{5}{3} (1+o(1)) q^{-9} < 1 + o(1)$.
  Hence, we can delete the at most $(1+o(1)) q$ points
  of $X$ in $\bigcup Y_{i,j}$ and obtain a $(9,2)$-set
  of size $(1+o(1)) q^2$.
\end{proof}

\section{Product Constructions via Field Reduction} \label{sec:field_red}

We present a generic product construction for $(r,s)$-sets.

\begin{Lemma}
  Let $X$ be a proper $(r, r-1)$-set in $\PG(N{-}1, q)$
  and let $Y$ be an $(r, s)$-set in $\PG(M{-}1, q^{N})$.
  Then there exists an $(r, s)$-set in $\PG(NM{-}1, q)$
  of size $|X| \cdot |Y|$.
\end{Lemma}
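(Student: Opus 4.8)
The plan is to use field reduction (the Desarguesian spread) to blow each point of $Y$ up into an $(N-1)$-space of $\PG(NM-1,q)$, to drop a copy of $X$ into each of these, and then to verify evasion by separating the ``horizontal'' contributions (controlled by $Y$) from the ``vertical'' ones (controlled by $X$). Identifying $\Ff_{q^N}^M$ with $\Ff_q^{NM}$ as $\Ff_q$-spaces, the $1$-dimensional $\Ff_{q^N}$-subspaces become $N$-dimensional $\Ff_q$-subspaces, so the points of $\PG(M-1,q^N)$ give a Desarguesian spread $\mathcal D$ of $\PG(NM-1,q)$ by pairwise disjoint $(N-1)$-spaces partitioning the point set; let $\rho\colon \PG(NM-1,q)\to \PG(M-1,q^N)$ be the induced projection sending a point to the spread element containing it. Fixing an $\Ff_q$-linear isomorphism $\Ff_q^{N}\cong\Ff_{q^N}$ identifies each $\rho^{-1}(P)$ with $\PG(N-1,q)$, and I place a copy $X_P$ of $X$ in $\rho^{-1}(P)$ for every $P\in Y$, setting $Z=\bigcup_{P\in Y}X_P$. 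Since distinct spread elements are disjoint, $|Z|=\sum_{P\in Y}|X_P|=|X|\cdot|Y|$, which settles the size.

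For the evasion property, fix an $s$-space $T$ and let $P_1,\dots,P_t=\rho(T\cap Z)$ with fibre sizes $k_i=|X_{P_i}\cap T|$, so $|T\cap Z|=\sum_i k_i$. The horizontal bound comes from $Y$: each $P_i$ is the spread element of a point of $T$, hence lies in the $\Ff_{q^N}$-span $U=\langle T\rangle_{\Ff_{q^N}}$, whose $\Ff_{q^N}$-dimension is at most $\dim_{\Ff_q}T=s+1$; thus $\{P_1,\dots,P_t\}\subseteq Y\cap \PG(U)$ with $\PG(U)$ a subspace of dimension at most $s$, so the $(r,s)$-property of $Y$ forces $t\le r$. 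The vertical bound comes from $X$: the points of $X_{P_i}\cap T$ lie in the $\Ff_q$-subspace $\rho^{-1}(P_i)\cap T$, and since $X$ is a proper $(r,r-1)$-set its points are in general position, so any $k_i$ of them span a $(k_i-1)$-space; hence $k_i\le \dim_{\Ff_q}\!\big(\rho^{-1}(P_i)\cap T\big)$.

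Finally I would combine the two. The decisive point is a dimension budget inside $T$: the subspaces $\rho^{-1}(P_i)\cap T$ are pairwise disjoint over $\Ff_q$ but not independent, and the failure of independence is measured exactly by the $\Ff_{q^N}$-linear relations among the $P_i$. Writing $e=\dim_{\Ff_{q^N}}\langle P_1,\dots,P_t\rangle$, the representatives of $T\cap Z$ have $\Ff_q$-rank at most $s+1$ and $\Ff_{q^N}$-rank $e$; comparing the two shows both that $T\cap Z$ occupies at least $e$ independent $\Ff_q$-directions and that each point beyond the first in a fibre consumes one more. This is the crux: a large fibre (many points of $X$ in a single spread element) forces $T$ to align with that element and so to meet fewer others, so the horizontal count from $Y$ and the vertical excess from $X$ cannot both be large. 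Carrying this bookkeeping through — in the nondegenerate regime where $Y$ is a genuinely large $(r,s)$-set, i.e.\ $s\le r-1$ — yields $\sum_i k_i\le r$, whence $Z$ is an $(r,s)$-set.

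The step I expect to be the main obstacle is precisely this last combination. Bounding $t$ and each $k_i$ in isolation is routine, but neither the crude dimension bound $\sum_i k_i\le s+1$ nor $t\le r$ from $Y$ alone suffices; the real content is the trade-off between the number of spread elements met and the fibre sizes, which must be extracted from the interplay of $\Ff_q$- and $\Ff_{q^N}$-dimensions together with the general position of $X$.
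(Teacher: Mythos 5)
Your setup reproduces the paper's proof step for step: field reduction turns the points of $Y$ into elements of a Desarguesian spread of $(N{-}1)$-spaces, an arbitrary copy of $X$ is placed in each, disjointness of spread elements gives $|Z|=|X|\cdot|Y|$, properness of $X$ (general position) gives the fibre bound $k_i\le\dim_{\Ff_q}\bigl(\rho^{-1}(P_i)\cap T\bigr)$, and the $\Ff_{q^N}$-span of $T$ gives $t\le r$. But the paper's entire verification then rests on the single inequality
\[
  \sum_{i=1}^{t}\Bigl(\dim\bigl(T\cap\rho^{-1}(P_i)\bigr)+1\Bigr)\;\le\; r
\]
(projective dimensions), which together with the fibre bound finishes the proof in one line --- and this is exactly the step you flag as ``the main obstacle'' and do not carry out. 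So what you have is the paper's argument with its one essential inequality replaced by a plausibility discussion; that is a genuine gap, not a proof. (To be fair, the paper itself justifies this inequality only with the words ``as $Y$ is an $(r,s)$-set,'' so you have correctly located where all the content sits; but locating it is not supplying it.)

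Moreover, the bookkeeping you sketch would not close it as stated. The subspaces $T\cap\rho^{-1}(P_i)$ are pairwise disjoint but \emph{not} in direct sum once the $P_i$ are $\Ff_{q^N}$-linearly dependent, so the excess $\sum_i(k_i-1)$ is not controlled by the rank drop $s+1-e$: for instance, if $T$ is the $\Ff_q$-space underlying a $2$-dimensional $\Ff_{q^N}$-subspace $U$ (so $s+1=2N$ and $e=2$), then \emph{every} point of $Y$ on the line $\PG(U)$ is met in full fibre dimension $N$, the tensor/kernel count produces no $\sum_i(k_i-1)$ independent relations, and your fallback inequality $\sum_i k_i\le s+1$ is simply false in this regime. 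When the $P_i$ are $\Ff_{q^N}$-independent the bound is easy ($\sum_i k_i\le s+1\le r$ for any nontrivial $(r,s)$-set, since $r\le s$ forces $|Y|\le r$); the hard case is dependent $P_i$ with large fibres, and there one must apply the $(r,s)$-property of $Y$ to the spans of sub-collections of the $P_i$ \emph{weighted} by their fibre dimensions, trading each unit of fibre excess against a unit of $\Ff_{q^N}$-rank deficiency of the corresponding points of $Y$. Your parenthetical restriction to $s\le r-1$ appears nowhere in the statement and does not by itself produce this trade-off, so the decisive weighted argument is missing from the proposal.
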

\begin{proof}
  By field reduction, the elements of $Y$ correspond
  to $(N{-}1)$-spaces in $\PG(NM{-}1, q)$.
  Arbitrarily pick a copy of $X$ in each such $(N{-}1)$-space.
  We obtain a set $Z$ of size $|X| \cdot |Y|$.
  
  It remains to show that $Z$ is an $(r,s)$-set. 
  For this, consider a $s$-space $S$.
  Let $T_1, \ldots, T_m$ be the set of $(N{-}1)$-spaces
  in $Y$ which $S$ meets.
  As $Y$ is an $(r, s)$-set, $\sum_{i=1}^m (\dim(S \cap T_i) + 1) \leq r$.
  As $X$ is a proper $(r,r-1)$-set, $|S \cap T_i \cap Z| \leq \dim(S \cap T_i)+1$.
  As $S \cap Z = \bigcup_{i=1}^m S \cap T_i \cap Z$, $Z$ is a $(r,s)$-set.
\end{proof}

This way we obtain 
\begin{enumerate}[(i)]
 \item caps of size $(1+o(1)) q^{\frac{2}{3} n}$ in $\PG(n, q)$
for $n = 4^m-1$ by multiplying $m$ ovoids of $\PG(3, q)$
(This is best known, see \cite{EB1999});
 \item $(3,2)$-sets of size $(1+o(1)) q^{\frac{1}{3} n}$ in $\PG(n, q)$
 for $n = 4^m-1$ by multiplying $m$ rational normal curves of $\PG(4, q)$
 (Lemma \ref{lem:lower_bnd2}: $C q^{\frac{n}{3} - \frac23}$);
 \item $(3,2)$-sets of size $(1+o(1)) q^{\frac{n}{3} - \frac12}$ in $\PG(n, q)$
 for $n = 7 \cdot 4^m-1$ 
 by multiplying the Construction in Theorem \ref{thm:q32constr}
 with $m{-}1$ rational normal curves of $\PG(4, q)$;
 \item $(r,r-1)$-sets of size $(1+o(1)) q^{\frac{n+1}{c}}$ in $\PG(n, q)$
 for $n = (c+1)^m-1$ by multiplying $m$ rational normal curves of $\PG(c, q)$ 
 (Lemma \ref{lem:lower_bnd2}: $C q^{\frac{n+1}{c} - \frac{c-1}{c}}$).
\end{enumerate}

\section{Concluding Remarks}

Let us conclude with some open problems and conjectures.
For $q$ fixed and $n \rightarrow \infty$,
Ellenberg and Gijswijt famously showed that 
a cap has at most size $O(2.756^n)$ \cite{EG2017},
while a cap due to Edel has size $\Omega(2.2174^n)$ \cite{Edel2004}.
Note that the results by Ellenberg and Gijswijt have been 
generalized to $(r, r-1)$-sets in \cite{Bennett2019}.
For our setting of $q \rightarrow \infty$
and $n$ fixed, we believe the following to be true.

\begin{Conjecture}
  There exists a constant $\eps > 0$ such that
  the size of a cap in $\PG(n, q)$ is bounded by $O(q^{n-1-\eps})$.
\end{Conjecture}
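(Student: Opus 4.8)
\emph{Toward this conjecture we outline a plausible line of attack, together with the point at which it currently stalls.} Since the bound $O(q^{n-1})$ is attained by conics and ovoids, the conjecture fails for $n=2,3$, so we read it for fixed $n \geq 4$ (with $\eps$ allowed to depend on $n$); the first open case is $n=4$. The plan is to improve on the trivial incidence bound $|X| \leq q^{n-1}+1$ by locating, for a typical point of the cap, a power-proportion of lines through it that carry no second cap-point, and to extract this slack from the fact that caps inside planes are tiny.

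First I would double count against planes. For a plane $\pi$ write $m_\pi = |X \cap \pi|$; since $\pi \cong \PG(2,q)$ and a plane cap has at most $q+2$ points, $m_\pi \leq q+2$. Because a fixed line lies in $(1+o(1))q^{n-2}$ planes and a triple of cap-points spans a unique plane, one gets $\sum_\pi \binom{m_\pi}{2} = (1+o(1))\tfrac{|X|^2}{2}q^{n-2}$ and $\sum_\pi \binom{m_\pi}{3} = (1+o(1))\tfrac{|X|^3}{6}$. The naive convexity inequality $\binom{m_\pi}{3} \leq \tfrac{q}{3}\binom{m_\pi}{2}$ then recovers \emph{exactly} $|X| \leq (1+o(1))q^{n-1}$, with equality forced when almost all planes meeting $X$ are near-maximal. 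So the whole game is to show that the \textbf{rich} planes, those with $m_\pi \geq \delta q$, are too sparse to saturate this count. Here I would invoke the rigidity of large plane caps: by Segre's theorem a cap of size $\geq \delta q$ in $\PG(2,q)$ ($q$ odd) is a large arc, hence lies on (or near) a conic, so every rich plane carries an almost-conic of $X$. The target is a stability statement saying these conics cannot be packed at the density demanded by the equality case, using that a single secant of $X$ is shared by only $(1+o(1))q^{n-2}$ planes and so cannot be reused by many rich planes at once.

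A complementary route is projection. Fixing $P \in X$ and projecting from $P$ onto a hyperplane is injective on $X \setminus \{P\}$, because two points with the same image would be collinear with $P$; thus $Y \coloneq \pi_P(X \setminus \{P\}) \subseteq \PG(n-1,q)$ has $|X|-1$ points, and a collinear triple of $Y$ corresponds to a cap-triple of $X$ whose plane passes through $P$. Counting pairs $(P, \text{triple})$ with $P$ in the triple's plane gives at most $(1+o(1))\tfrac{|X|^3 q}{6}$ such incidences, so some $P$ yields a $Y$ with only $O(|X|^2 q)$ collinear triples. An induction on $n$ would close the gap \emph{if} one had a supersaturation bound forcing any $M$-point set in $\PG(n-1,q)$ with $M \gg q^{n-2}$ to span more collinear triples than this.

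The main obstacle is precisely this last implication, and it is structural rather than technical. Every first-moment estimate above is tight at $q^{n-1}$: a random set of density $1$ in $\PG(n-1,q)$ has $(1+o(1))\tfrac{q^{2n-1}}{6}$ collinear triples, which matches the projection bound $O(|X|^2 q)$ at $|X| \approx q^{n-1}$ on the nose, so no contradiction survives at the level of expectations. Beating the trivial exponent therefore requires a genuine second-order input, either a stability theorem showing that conic-structured rich planes cannot tile $\PG(n,q)$ efficiently, or a supersaturation result showing that extremal near-caps have \emph{more} collinear triples than the random heuristic predicts. The polynomial method offers no shortcut here, since a cap of size $q^{n-1}$ only forces a hypersurface of degree $\approx q^{(n-1)/n}$, far too large for Bézout-type control. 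Converting the local scarcity of shared secants into a global power saving on the number of rich planes is, in my view, the crux on which the conjecture turns.
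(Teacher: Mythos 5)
This statement is a \emph{Conjecture} in the paper: the authors offer no proof, and indeed flag $\PG(4,q)$ as the first open case, with the best known construction there of size $(3+o(1))q^2$ against the trivial bound $O(q^3)$. Your submission, by its own candid admission, is not a proof either, so there is a genuine gap --- but your accounting of \emph{why} the problem is hard is essentially accurate. The double count is right: with $m_\pi \leq q+2$ for plane caps, $\sum_\pi \binom{m_\pi}{3} \leq \frac{q}{3}\sum_\pi \binom{m_\pi}{2}$ recovers exactly $|X| \leq (1+o(1))q^{n-1}$; the projection from a cap point is injective and yields a set in $\PG(n-1,q)$ with $O(|X|^2 q)$ collinear triples; and a density-one random set in $\PG(n-1,q)$ has $(1+o(1))q^{2n-1}/6$ collinear triples, so the two match at $|X| \approx q^{n-1}$ and no first-moment contradiction survives. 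You have correctly identified that any proof must supply a second-order input (supersaturation or stability), which is precisely what is missing --- from your argument and from the literature.

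One concrete misstep worth flagging: the appeal to Segre does not do what you need. Segre's theorem characterizes \emph{ovals}, i.e.\ $(q+1)$-arcs in $\PG(2,q)$ for $q$ odd, as conics, and its extensions force an arc onto a conic only when its size is at least $q - c\sqrt{q}$ for suitable $c$. A plane section of size $\delta q$ with $\delta$ bounded away from $1$ carries no such rigidity: complete arcs of size far below $q$ exist (down to roughly $q^{3/4}$ and smaller), so ``rich'' planes in your sense need not contain almost-conics at all. Worse, your equality analysis does not actually force $m_\pi \approx q$ for the planes saturating the count; the convexity step is also near-tight when the mass is spread over planes with $m_\pi = \eps q$, where no structural theorem applies. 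So the ``rigidity of rich planes'' route would first require showing concentration at $m_\pi \geq q - O(\sqrt{q})$, a statement your counting does not provide. The conjecture remains open, and your analysis is best read as a correct diagnosis of the obstruction rather than progress past it.
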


\begin{Problem}
  Find a set of $\Omega(q^3)$ points
  in $\PG(4, q)$ with at most $O(q^{5})$ triples of collinear points.
\end{Problem}

Using \cite[Theorem 3]{DLR1994},
such an $(r, 1)$-set implies the existence 
of a cap of size $C' q^2 \sqrt{\log q}$ in $\PG(4, q)$.
This approach was suggested by Dhruv Mubayi.

The rational normal curve gives an 
example of size $q+1$ for a set of points with no 4 coplanar
(a $(3, 2)$-set), while in $\PG(4, q)$ the best known upper bound is $(\sqrt{2}+o(1)) q^{\frac32}$. In light of this, the following is the first 
important open problem in the investigation of $(r, s)$-sets.

\begin{Problem}
  Find a $(3,2)$-set in $\PG(4, q)$ of size $\Omega(q^{1+\eps})$.
\end{Problem}

\smallskip 

\paragraph*{Acknowledgment}
We thank David Conlon, Dhruv Mubayi, 
Leo Storme, Benny Sudakov, and Istv\'{a}n Tomon 
for comments on an earlier version of this document.
The first author is supported by a 
postdoctoral fellowship of the Research Foundation -- Flanders (FWO).

\end{document}